\documentclass[11pt]{article}
\usepackage[margin=1in]{geometry}
\usepackage{amsthm,amsmath}
\usepackage{latexsym,amssymb,amsmath}
\usepackage{graphicx,float,color,fancybox,shapepar,setspace,hyperref}
\usepackage{subfigure}
\usepackage{pgf,tikz}
\usepackage[affil-it]{authblk}
\usepackage{indentfirst}
\usepackage{hyperref}
\usepackage[section]{placeins}
\usepackage{enumitem}
\usepackage{lineno}
\hypersetup
{
colorlinks=true,
linkcolor=blue,
filecolor=blue,
urlcolor=blue,
citecolor=cyan,
}
\usetikzlibrary{arrows}
\makeatletter
\renewenvironment{proof}[1][\proofname]{\par
  \normalfont \topsep6\p@\@plus6\p@\relax
  \trivlist
  \item[\hskip\labelsep
        \itshape
    #1\@addpunct{.}]\ignorespaces
}{%
  \hfill $\square$ % 手动添加白色方块
  \endtrivlist\@endpefalse
}
\makeatother

\newtheorem{definition}{Definition}
\newtheorem{theorem}{Theorem}
\newtheorem{lemma}{Lemma}
\newtheorem{proposition}{Proposition}
\newtheorem{corollary}{Corollary}

\newtheorem{claim}{Claim}

\newtheorem{problem}{Problem}

 \def\qed{\hfill\square}

\def\~{\sim}

\def\qed{ \hfill $\blacksquare$}

\begin{document}

\title{The maximum number of triangles in graphs without vertex disjoint friendship graphs}

\author[1]{Wanfang Chen\footnote{Email: \texttt{a372959313@gmail.com}}}

\author[2]{Jia-Bao Yang\footnote{Corresponding author. Email: \texttt{jbyang1215@163.com}}}

\author[3]{Leilei Zhang\footnote{Email: \texttt{mathdzhang@163.com}}}

\date{}

\affil[1]{\small School of Mathematical Sciences, University of Science and Technology of China, Hefei, 230026,  P.R. China}
\affil[2]{\small School of Mathematics, Nanjing University, Nanjing, 210093,  P.R. China }
\affil[3]{\small Faculty of Environment and Information Sciences, Yokohama National University, Yokohama 240-8501, Japan}

\maketitle

\begin{abstract}
Given graphs $H$ and $F$, the generalized Tur\'an number $\mathrm{ex}(n,H,F)$ is the maximum number of copies of $H$ among all $n$-vertex $F$-free graphs. The friendship graph $F_k$ consists of $k$ triangles sharing a common vertex. In this paper, we determine the value of $\mathrm{ex}(n,K_3,(t+1)F_k)$, where $K_3$ is a triangle, $t\geq 1$ is an integer, and $(t+1)F_k$ denotes a union of $(t+1)$ pairwise vertex-disjoint copies of $F_k$. Moreover, we characterize the extremal structure. Our result can be viewed as a generalization of the result of Zhu, Chen, Gerbner, Gy\H{o}ri, and Hama Karim, as well as of the remaining case left open by Wang, Ni, Liu, and Kang. In contrast to the extremal graphs of $F_k$, the extremal graphs of $(t+1)F_k$ undergo a fundamental change. This structure is also different from those of previous similar problems.
\end{abstract}

{{\bf Keywords:} Generalized Tur\'an number; disjoint union; friendship graph; extremal graph}

\section{Introduction}

For a graph $G$, we let $V(G)$ and $E(G)$ denote the vertex set and the edge set of $G$, respectively.
We say that a graph $G$ is {\it $F$-free} if it does not contain a copy of $F$ as a subgraph.
The {\it Tur\'an number} $\mathrm{ex}(n,F)$ of $F$ is the maximum number of edges in an $F$-free graph on $n$ vertices. 
The study of $\mathrm{ex}(n,F)$ is a central topic in extremal combinatorics, 
and one of the most famous results in this area is Tur\'an's \cite{turan1941} theorem, 
which states that the unique $n$-vertex $K_{r+1}$-free graph with the maximum number of edges is the balanced complete $r$-partite graph on $n$ vertices $T_r(n)$, 
where $K_{r+1}$ is the complete graph on $r+1$ vertices.

A {\it matching} in $G$ is a set of pairwise disjoint edges in $G$. 
The {\it matching number} $\nu(G)$ is the number of edges in a maximum matching of $G$.
Let $e(G)$ denote the number of edges in $G$. 
For $v\in V(G)$, let $N_G(v)$ denote the neighborhood of $v$ in $G$, and let $d_G(v)=|N_G(v)|$ denote its degree.
Let $\Delta(G)$ be the maximum degree of $G$. 
Define $$f(\nu,\Delta)=\max\{e(G):\nu(G)\leq \nu, \Delta(G)\leq \Delta\}.$$
In 1972, Abbott, Hanson, and Sauer \cite{AbbottHS1972} determined the value of $f(k-1,k-1)$.
That is
\begin{eqnarray*}
f(k-1,k-1)=
  \begin{cases}
  k(k-1),& k\text{ odd},\\[1mm]
  k\left(k-\frac{3}{2}\right),& k\text{ even}.
  \end{cases}
\end{eqnarray*}

Later Chv\'atal and Hanson \cite{ChvatalH1976} proved the following theorem.

\begin{theorem}[Chv\'atal and Hanson \cite{ChvatalH1976}]\label{thm:ChvatalH1976}
For all integers $\nu\geq 1$ and $\Delta\geq 1$,
\begin{equation*}
  f(\nu,\Delta)=\nu\Delta+
  \left\lfloor \frac{\Delta}{2} \right\rfloor 
  \left\lfloor \frac{\nu}{\left\lceil \Delta/2 \right\rceil} \right\rfloor.
\end{equation*}
\end{theorem}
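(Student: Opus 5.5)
The plan is to prove the two inequalities separately. The lower bound comes from an explicit construction. The upper bound combines the Gallai--Edmonds (equivalently, Tutte--Berge) structure theorem with degree counting.

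\textbf{Lower bound.} Write $\nu=q\lceil\Delta/2\rceil+r$ with $0\le r<\lceil\Delta/2\rceil$. The construction is a disjoint union of three kinds of pieces.
\begin{enumerate}
\item Take $q$ copies of a $\Delta$-regular (or nearly regular) graph on $\Delta+1$ vertices when $\Delta$ is even, and on $\Delta+2$ vertices when $\Delta$ is odd. Each copy has an odd number $2\lceil\Delta/2\rceil+1$ of vertices, so its matching number is $\lceil\Delta/2\rceil$. A direct count gives $\lceil\Delta/2\rceil\Delta+\lfloor\Delta/2\rfloor$ edges per copy:
\begin{itemize}
\item for even $\Delta$, the copy is $K_{\Delta+1}$;
\item for odd $\Delta$, delete a near-perfect matching from $K_{\Delta+2}$, keeping one vertex of degree $\Delta+1$ trimmed to $\Delta-1$.
\end{itemize}
\item Add $r$ copies of the star $K_{1,\Delta}$. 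Each contributes matching number $1$ and $\Delta$ edges.
\end{enumerate}
Summing gives exactly $\nu\Delta+\lfloor\Delta/2\rfloor q$ edges. I expect this part to be a routine parity check.

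\textbf{Upper bound.} Let $G$ satisfy $\nu(G)\le\nu$ and $\Delta(G)\le\Delta$. Apply the Gallai--Edmonds decomposition with Tutte set $S$. Let $C_1,\dots,C_m$ be the odd components of $G-S$, which are factor-critical, and let $D$ be the even part, which has a perfect matching. Write $|C_i|=2c_i+1$ and $|D|=2d$. Then
\[
\nu=|S|+d+\sum_i c_i .
\]
The edges split into three groups:
\begin{itemize}
\item edges incident to $S$, at most $\Delta|S|$ in total;
\item edges inside $D$, at most $d\Delta$;
\item edges inside each $C_i$, at most $\min\{\binom{2c_i+1}{2},\lfloor(2c_i+1)\Delta/2\rfloor\}$.
\end{itemize}
It therefore suffices to show, for each $c\ge1$,
\[
\min\Bigl\{\tbinom{2c+1}{2},\Bigl\lfloor\tfrac{(2c+1)\Delta}{2}\Bigr\rfloor\Bigr\}\le c\Delta+\Bigl\lfloor\tfrac{\Delta}{2}\Bigr\rfloor\Bigl\lfloor\tfrac{c}{\lceil\Delta/2\rceil}\Bigr\rfloor,
\]
and then to use superadditivity of $x\mapsto\lfloor x/\lceil\Delta/2\rceil\rfloor$ to sum over the components. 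Components with $c_i=0$ are single vertices and contribute no internal edges.

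\textbf{Main obstacle.} The hard step is this per-component inequality, specifically the regime $c<\lceil\Delta/2\rceil$. There the right side is $c\Delta$, and the bound must come from $\binom{2c+1}{2}=c(2c+1)\le c\Delta$. That holds exactly when $2c+1\le\Delta$, which is guaranteed by $c\le\lceil\Delta/2\rceil-1$.

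For $c\ge\lceil\Delta/2\rceil$, I use the degree bound $c\Delta+\lfloor\Delta/2\rfloor$. This has to be compared with $c\Delta+\lfloor\Delta/2\rfloor\lfloor c/\lceil\Delta/2\rceil\rfloor$, and since $\lfloor c/\lceil\Delta/2\rceil\rfloor\ge1$ in this regime, the comparison goes the right way.

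Finally, the summation over components must handle the losses carefully. The key inequality is
\[
\sum_i\Bigl\lfloor\tfrac{c_i}{\lceil\Delta/2\rceil}\Bigr\rfloor\le\Bigl\lfloor\tfrac{\nu}{\lceil\Delta/2\rceil}\Bigr\rfloor,
\]
which holds because $\sum_i c_i\le\nu$. Combining the three edge groups with this inequality yields $e(G)\le f(\nu,\Delta)$ as stated in Theorem~\ref{thm:ChvatalH1976}.
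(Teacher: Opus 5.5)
The paper gives no proof of this statement. It quotes the Chv\'atal--Hanson theorem as a black box and uses it only through Corollary~\ref{cor:internal-at-most-f} and the definition of $\mathcal{P}_k$, so there is no internal argument to compare against.

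Judged on its own, your proof is correct and complete. It is essentially the known short proof via the Gallai--Edmonds structure theorem (as in Balachandran and Khare).

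You actually use less than Gallai--Edmonds. You never need that the odd components are factor-critical, or that the even part has a perfect matching. All you use is the identity $\nu(G)=|S|+d+\sum_i c_i$ and the absence of edges between distinct components of $G-S$. So the Tutte--Berge formula with a maximizing $S$ already suffices.

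Both per-component cases check out:
\begin{itemize}
\item in the first, $2c+1\le 2\lceil\Delta/2\rceil-1\le\Delta$;
\item in the second, $\lfloor c/\lceil\Delta/2\rceil\rfloor\ge 1$.
\end{itemize}
The final summation via superadditivity of the floor and $\sum_i c_i\le\nu(G)\le\nu$ is fine.

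Compared with the citation, your route makes the result self-contained. It also gives more: recording when each inequality is tight yields a handle on the extremal graphs, i.e.\ on the family $\mathcal{P}_k$ that the paper works with but never describes explicitly. The tight cases are: $S$ independent with all degrees $\Delta$; a $\Delta$-regular even part; each nontrivial odd component either a $K_\Delta$ or almost $\Delta$-regular; and no loss in the floor sums.

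There is one wording slip in the lower bound. For odd $\Delta$, after deleting a near-perfect matching from $K_{\Delta+2}$, the uncovered vertex $w$ has degree $\Delta+1$. You should delete exactly one further edge $wz$, so that $w$ ends with degree $\Delta$ and $z$ with degree $\Delta-1$. Trimming $w$ itself down to $\Delta-1$, as your sentence literally says, would cost an edge. With these $(\Delta+3)/2$ deletions the gadget has $\binom{\Delta+2}{2}-\frac{\Delta+3}{2}=\frac{\Delta^2+2\Delta-1}{2}=\lceil\Delta/2\rceil\Delta+\lfloor\Delta/2\rfloor$ edges, as you claimed.
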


The friendship graph $F_k$ consists of $k$ triangles sharing a common vertex $v$, called the center of $F_k$.
For $V\subseteq V(G)$, the subgraph of $G$ induced by $V$ is denoted by $G[V]$. 
A graph $G$ contains a copy of $F_k$ with center $v$ if and only if the induced subgraph $G[N_G(v)]$ contains a matching of size $k$.
In 1995, Erd\H{o}s, Füredi, Gould, and Gunderson \cite{ErdosFGG1995} determined the Tur\'an number of the friendship graph as follows.

\begin{theorem}[Erd\H{o}s, Füredi, Gould, and Gunderson \cite{ErdosFGG1995}]\label{the:ErdosFGG1995}
For every $k \geq 1$ and $n\geq 50k^2$, 
$$\mathrm{ex}(n,F_k)=\left\lfloor \frac{n^2}{4}\right\rfloor+f(k-1,k-1).$$
\end{theorem}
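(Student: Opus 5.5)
We prove Theorem~\ref{the:ErdosFGG1995} in two parts: a lower-bound construction and a stability-type upper bound.

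\emph{Lower bound.} Take a graph $H$ on a bounded vertex set with $\nu(H)\le k-1$, $\Delta(H)\le k-1$ and $e(H)=f(k-1,k-1)$; by Theorem~\ref{thm:ChvatalH1976} it has at most about $2k$ non-isolated vertices. Place a copy of $H$ inside one part of the balanced complete bipartite graph $T_2(n)$. The result has $\lfloor n^2/4\rfloor+f(k-1,k-1)$ edges.

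We check that it is $F_k$-free. Fix a vertex $v$ and a hypothetical $F_k$ centred at $v$. Every triangle of the graph uses exactly one edge of $H$, since the bipartite part is triangle-free and $H$ lies in one part. If $v$ lies in the part containing $H$, each triangle through $v$ uses an $H$-edge at $v$, giving at most $d_H(v)\le k-1$ disjoint triangles. Otherwise each triangle through $v$ uses an $H$-edge not at $v$, and disjoint triangles need a matching of $H$, giving at most $\nu(H)\le k-1$. In both cases there is no $F_k$.

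\emph{Upper bound.} Let $G$ be $F_k$-free with $n\ge 50k^2$ vertices and $e(G)\ge \lfloor n^2/4\rfloor+f(k-1,k-1)$.
\begin{enumerate}
\item Since $F_k$ has chromatic number $3$, Erd\H{o}s--Stone--Simonovits stability gives a max-cut partition $V=A\cup B$ with $|A|,|B|=n/2+o(n)$ and $o(n^2)$ edges inside the parts. We want an explicit bound polynomial in $k$, so we use a counting version: $F_k$ lies in $K_{3}(1,k,k)$-type structures, and a K\H{o}v\'ari-type argument shows few triangles when edges are near the Tur\'an number.
\item Delete low-degree vertices iteratively. If some vertex has degree below $n/2 - ck$, removing it keeps the edge surplus; the standard induction on $n$ handles this, since removal raises the excess $e-n^2/4$.
\item Now every vertex has degree at least roughly $n/2-ck$. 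Each vertex has at most $O(k)$ neighbours on its own side: otherwise, together with its $\ge n/2-O(k)$ cross neighbours and the near-completeness of the cross bipartite graph, one finds $k$ disjoint triangles through it.
\item Let $H_A=G[A]$ and $H_B=G[B]$, and let $M$ be the set of missing cross edges. Then $e(G)\le |A||B|-|M|+e(H_A)+e(H_B)$. If $x\in A$ has $k$ neighbours in $A$, greedily matching them to distinct common neighbours in $B$ yields an $F_k$ unless many cross edges are missing; so $\Delta(H_A)\le k-1$ up to a charge against $M$. Likewise, a matching of size $k$ in $H_A$ together with a common $B$-neighbour of all its $2k$ endpoints, which exists when $M$ is small, yields an $F_k$, so $\nu(H_A)\le k-1$ up to a charge.
\item If both $H_A$ and $H_B$ contain edges, take $e\in A$ and $e'\in B$ together with suitable cross edges; combined with degree considerations this gives a contradiction, or shows that $e(H_A)+e(H_B)\le f(k-1,k-1)$ via Theorem~\ref{thm:ChvatalH1976} applied to $H_A\cup H_B$. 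Here a center in $A$ can use triangles from both sides, so we must verify that $\nu$ and $\Delta$ of the union are controlled. This is the delicate point.
\end{enumerate}

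The main obstacle is the charging in step~4: each violation of $\Delta\le k-1$ or $\nu\le k-1$ in $H_A\cup H_B$ must be paid for by at least as many missing cross edges. The approach is to take a maximal violating configuration (a high-degree vertex or a large matching) and show that blocking each of the $k$ required triangle completions costs at least one missing cross edge per excess inner edge. Since vertices have only $O(k)$ inner neighbours and $n\ge 50k^2$, there is room for the greedy choices. This yields $e(G)\le |A||B|+f(k-1,k-1)\le\lfloor n^2/4\rfloor+f(k-1,k-1)$.
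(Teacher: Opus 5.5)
The paper does not prove this statement. It is quoted from \cite{ErdosFGG1995} and used only as a black box, in Claim~\ref{claim:pf:stability-for-T:claim2} and only in the weak form $e\le n^2/4+O(n)$. So your proposal has to stand on its own. Your lower bound does, but the upper bound has a genuine gap, exactly at the point you flag in step~5.

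Per-side control $\nu,\Delta\le k-1$ for $H_A$ and for $H_B$ only gives $e(H_A)+e(H_B)\le 2f(k-1,k-1)$. The union is \emph{not} controlled. For $k\ge 2$ and any $P,Q\in\mathcal{P}_k$, the graph $H_n(P,Q,R)$ with $R$ edgeless is $F_k$-free by Lemma~\ref{lem:construction-is-free}, yet it has edges on both sides, $2f(k-1,k-1)$ internal edges, and $\nu(P\cup Q)=2k-2$. Hence neither ``edges on both sides give a contradiction'' nor ``apply Theorem~\ref{thm:ChvatalH1976} to $H_A\cup H_B$'' can work. A contradiction can only come from counting missing cross edges.

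What $F_k$-freeness actually yields (compare Claim~\ref{claim:pf:main-reduce-admissibility}) is the mixed condition $d_{H_A}(a)+\nu\bigl(H_B[N(a)\cap B]\bigr)\le k-1$ for $a\in A$, together with its mirror image. From this you must derive the finite inequality $|M|\ge e(H_A)+e(H_B)-f(k-1,k-1)$. That inequality is essentially the whole content of the theorem: it is where $f(k-1,k-1)$, rather than $2f(k-1,k-1)$, comes from. Your sentence about blocking triangle completions ``at one missing cross edge per excess inner edge'' restates this inequality rather than proving it.

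Independently of step~5, the outline cannot reach the threshold $n\ge 50k^2$.
\begin{itemize}
\item Step~1 uses Theorem~\ref{the:Erdos-Simonovits-stability-theorem} with $o(n^2)$ error terms. That is meaningful only for $n\ge n_0(k,\varepsilon)$, with nothing forcing $n_0\le 50k^2$. The promised polynomial ``counting version'' is never supplied, and the obvious route from few triangles to stability goes through the removal lemma, again with an inexplicit threshold.
\item The induction in step~2 descends below $50k^2$. To avoid this you would need an a~priori upper bound on $e(G)$ for $F_k$-free graphs, showing that the vertex-deletion process stops while many vertices remain.
\item In step~3, a global bound of $o(n^2)$ missing cross edges only bounds inner degrees by $O(k+|M|/n)=o(n)$, not $O(k)$. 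To get $O(k)$, and then $\le k-1$ via Lemma~\ref{lem:heavy-CH}, you must first bootstrap $|M|$ and $e(H_A)+e(H_B)$ down to $O(1)$, as in Lemma~\ref{lem:friendship-graphs-stability}. That bootstrapping again needs $n$ sufficiently large.
\end{itemize}
So at best your outline targets a ``sufficiently large $n$'' version, and even that remains unproved because of step~5.

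Two minor slips in the lower bound:
\begin{itemize}
\item Triangles inside $H$ use three $H$-edges, not exactly one. Your case analysis only needs ``at least one'', so this is harmless.
\item The size bound on $H$ follows from $\nu(H),\Delta(H)\le k-1$, which allow at most $2k(k-1)<n/2$ non-isolated vertices. It does not follow from Theorem~\ref{thm:ChvatalH1976}.
\end{itemize}
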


An {\em generalized friendship graph} consists of $k$ copies of $K_r$ sharing a common vertex.
In particular, $F_k^3 = F_k$. Chen, Gould, Pfender, and Wei \cite{ChenGPW2003} extended Erd\H{o}s, Füredi, Gould, and Gunderson's results to the generalized case.

\begin{theorem}[Chen, Gould, Pfender, and Wei \cite{ChenGPW2003}]\label{the:ChenGPW2003}
For every $k \geq 1$, $r \geq 2$ and $n \geq 16k^3r^8$, 
$$\mathrm{ex}(n,F_k^r)=e(T_{r-1}(n))+f(k-1,k-1).$$
\end{theorem}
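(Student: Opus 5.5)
The statement immediately above is Theorem~\ref{the:ChenGPW2003}, $\mathrm{ex}(n,F_k^r)=e(T_{r-1}(n))+f(k-1,k-1)$ for $n\ge 16k^3r^8$. I would follow the stability-plus-cleaning scheme of Erd\H{o}s, F\"uredi, Gould and Gunderson for Theorem~\ref{the:ErdosFGG1995}, with the bipartite template replaced by $T_{r-1}(n)$.

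\textbf{Lower bound.} Take $T_{r-1}(n)$ and choose one part $V_1$. Inside $V_1$, embed an extremal graph $H$ for $f(k-1,k-1)$, i.e.\ $\nu(H)\le k-1$ and $\Delta(H)\le k-1$; this is possible since $|V_1|$ is large. Consider a copy of $K_r$ in the resulting graph. Any $r$ vertices include two from the same part, and only $V_1$ contains edges, so every $K_r$ uses exactly one edge of $H$ and one vertex from each other part.

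Now suppose the center $v$ of a copy of $F_k^r$ lies in $V_1$. Then all $k$ cliques use an $H$-edge at $v$, which forces $d_H(v)\ge k$, a contradiction. If instead $v$ lies outside $V_1$, the $k$ cliques use $k$ vertex-disjoint $H$-edges, which forces $\nu(H)\ge k$, again a contradiction. So the construction is $F_k^r$-free and gives the lower bound.

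\textbf{Upper bound.} Let $G$ be extremal, so $e(G)\ge e(T_{r-1}(n))+f(k-1,k-1)$. The steps are as follows.

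\begin{enumerate}
\item Apply the Erd\H{o}s--Simonovits stability theorem, since $F_k^r$ has chromatic number $r$. This gives a partition $V_1,\dots,V_{r-1}$ maximizing the number of crossing edges, with $o(n^2)$ edges inside parts. To keep the bound $n\ge 16k^3r^8$ polynomial, I would make this quantitative by removal/degree arguments rather than regularity.
\item Delete the vertices of low degree, below $(1-\frac{1}{r-1}-\varepsilon)n$. A counting argument shows there are few of them, and deleting one loses fewer edges than the extremal count allows, so one can show no such vertex exists.
\item Every remaining vertex then has at most $\varepsilon n$ non-neighbours in each other part. Hence any vertex $v$ with $k$ edges of $G[V_i]$ at it is dangerous: greedily extend each such edge $vu_j$ to a $K_r$ using common neighbours in the other parts, disjointly. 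So $\Delta(G[V_i])\le k-1$.
\item Similarly, $k$ independent edges spread within the parts, together with a common neighbour $v$ in another part, would form $F_k^r$. This yields $\sum_i \nu(G[V_i])\le k-1$ under a suitable accounting.
\item Conclude: the edges inside parts number at most $f(k-1,k-1)$, and the crossing edges number at most $e(T_{r-1}(n))$, with the missing crossing edges compensating exactly.
\end{enumerate}

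\textbf{Main obstacle.} The hardest step is the exact accounting in the last two steps. Edges inside different parts interact: edges in two parts can combine with a vertex in a third part. Also, missing crossing edges must be traded off against extra inner edges. I would handle this by bounding the inner graph $\bigcup_i G[V_i]$ as a single graph with $\nu\le k-1$ and $\Delta\le k-1$. If some vertex has many missing crossing edges, I would charge its excess inner edges against those deficits, using the maximality of the partition, which gives each vertex at least as many neighbours in every other part as in its own.
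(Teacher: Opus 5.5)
\paragraph{Verdict: genuine gap at Steps 4--5.}
The paper gives no proof of this statement; it is quoted from Chen, Gould, Pfender and Wei. Your outline therefore has to stand on its own, and it breaks at Steps~4--5.

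You claim that $\sum_i\nu(G[V_i])\le k-1$, i.e.\ that $\bigcup_i G[V_i]$ can be treated as one graph with $\nu\le k-1$ and $\Delta\le k-1$. This does not follow from $F_k^r$-freeness plus what Steps~1--3 provide. Your argument needs a centre $v$ in a part that contains none of the $k$ chosen independent edges. When the edges are spread over all parts, no such part exists.

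The paper's own construction is a counterexample at exactly this stage. For $r=3$, take $H_n(P,Q,R)$ with $E(R)=\emptyset$.
\begin{itemize}
\item It is $F_k$-free by Lemma~\ref{lem:construction-is-free}.
\item Every vertex misses only $O(1)$ vertices on the other side, and $\Delta(G[X]),\Delta(G[Y])\le k-1$.
\item Yet $\nu(G[X])+\nu(G[Y])=2(k-1)$ and $e(G[X])+e(G[Y])=2f(k-1,k-1)$.
\end{itemize}
For $r\ge 4$ the union can have matching number $k$ even with every crossing edge present. Take $k=6$, $r=4$, a complete $3$-partite graph, and two independent edges inside each part. Every link $G[N(v)]$ contains at most $5$ vertex-disjoint triangles, so there is no $F_6^4$, although the six inner edges are independent. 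So the bound ``inner edges $\le f(k-1,k-1)$'' in Step~5 is false at that point. It holds only for the extremal graph, and proving it there is the heart of the theorem.

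\paragraph{What actually has to be proved.}
Let $m$ be the number of non-adjacent pairs lying in different parts. Then $e(G)=e(K_{|V_1|,\dots,|V_{r-1}|})-m+\sum_i e(G[V_i])$. So what you must prove is the exact trade-off
\[
\sum_i e(G[V_i])-m\le f(k-1,k-1).
\]
Your last paragraph names this as the obstacle but offers only a charging heuristic. The max-cut inequality you invoke (own-part degree at most other-part degree) operates on the scale of $n$. It cannot produce an exact $O(1)$ trade-off.

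The missing ingredient is a local constraint of admissibility type at every vertex $a\in V_i$ incident with inner edges; compare Claim~\ref{claim:pf:main-reduce-admissibility}. Extend the inner edges at $a$ through common neighbours lying outside the bounded set of inner-edge vertices. This gives
\[
d_{G[V_i]}(a)+\nu\Bigl(\bigcup_{j\ne i}G[V_j\cap N(a)]\Bigr)\le k-1 .
\]
Two further steps are then needed:
\begin{itemize}
\item Convert this into a lower bound on the non-neighbours of $a$ among the inner-edge vertices of the other parts.
\item Carry out a Chv\'atal--Hanson-type optimisation showing that inner edges in two or more parts cost at least as many missing crossing pairs as they add.
\end{itemize}
Without this, Steps~4--5 simply assert the conclusion.

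\paragraph{Smaller points.}
\begin{itemize}
\item The statement needs $r\ge 3$. For $r=2$ and $k\ge 2$, $F_k^2=K_{1,k}$ and $\mathrm{ex}(n,K_{1,k})$ is linear in $n$. Your Step~4 silently uses a second part.
\item Erd\H{o}s--Simonovits as a black box yields only ``$n$ sufficiently large'', not $n\ge 16k^3r^8$.
\item Step~2 as phrased compares with $\mathrm{ex}(n-1,F_k^r)$ and is circular. Run it instead as the usual iterated deletion that increases the excess.
\item In the lower bound, a $K_r$ may contain a triangle of $H$ (e.g.\ $H=2K_k$). So ``exactly one edge of $H$'' should read ``at least one''. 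That is all your centre argument uses, so the lower bound survives.
\end{itemize}
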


After that, Hou, Li, and Zeng \cite{HouLZ2024}, and Chen, Lei, and Li \cite{ChenLL2025} considered the suspension of edge-critical graphs.

A natural generalization of the Tur\'an number is to count copies of a fixed graph $H$ instead of edges. 
For graphs $H$ and $G$, let $N(H,G)$ denote the number of copies of $H$ in $G$. 
The {\it generalized Tur\'an number} $\mathrm{ex}(n,H,F)$ is defined as
$$\mathrm{ex}(n,H,F):=\max\{N(H,G):|V(G)|=n,\ G \text{ is $F$-free}\}.$$
This problem was considered by Erd\H{o}s \cite{Erdos1962}, who determined $\mathrm{ex}(n, K_s, K_t)$ for all $t > s \geq 3$. 
Since then, it has attracted considerable attention; see, for example, \cite{Bollobas2008,Chase2020,Chenya2024,Furedi2015,Grzesik2012,Hatami2013,Luo2017,Yang2024}.

A natural problem is to determine the maximum number of copies of $K_3$ in $F_k$-free graphs.
In 2016, Alon and Shikhelman \cite{Alon2016} provided an upper bound on the number of triangles in $F_k$-free graphs.

\begin{theorem}[Alon and Shikhelman \cite{Alon2016}]\label{thm:Alon2016}
For every $k \geq 2$,
$$\mathrm{ex}(n,K_3,F_k)<(9k-15)(k+1)n.$$
\end{theorem}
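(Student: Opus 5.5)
The bound should follow from counting triangles through vertices and using the fact that every vertex neighbourhood has small matching number. Let $G$ be an $n$-vertex $F_k$-free graph. For every vertex $v$, $G[N_G(v)]$ contains no matching of size $k$; otherwise $v$ would be the center of a copy of $F_k$. Thus $\nu(G[N_G(v)])\le k-1$. The number of triangles containing $v$ equals $e(G[N_G(v)])$, so
\[
3N(K_3,G)=\sum_{v} e(G[N_G(v)]).
\]
It therefore suffices to show that the average of $e(G[N_G(v)])$ over $v$ is $O(k^2)$, with the explicit constant $3(9k-15)(k+1)$ or better.

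\textbf{Step 1: structure of a neighbourhood.} Fix $v$ and take a maximum matching $M_v$ of $G[N_G(v)]$, with $|M_v|\le k-1$. The vertex set $S_v=V(M_v)$ has at most $2k-2$ vertices. Every edge of $G[N_G(v)]$ meets $S_v$, since otherwise $M_v$ could be enlarged. Hence $e(G[N_G(v)])\le \sum_{u\in S_v}d_{G[N_G(v)]}(u)$.

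\textbf{Step 2: the degree of $u$ inside $N_G(v)$ is bounded, or $u$ is special.} Suppose some $u\in S_v$ has more than $2k$ neighbours inside $N_G(v)$. By a Gallai-type or alternating-path argument we can then replace the matching edge at $u$ and locate a large star. The cleaner way is to bound the number of triangles on each edge $uv$. The triangles on $uv$ correspond to $N(u)\cap N(v)$. If edge $uv$ lies in many triangles, then $u$ and $v$ are both "high codegree" vertices.

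Rather than handling this locally, I would double count: $3N(K_3,G)=\sum_{uv\in E}|N(u)\cap N(v)|$. Call an edge \emph{heavy} if its codegree is at least $ck$ for a suitable $c$, say $c\approx 9$. I claim the heavy edges form a graph $H$ with $\nu$ bounded in every neighbourhood. More importantly, no vertex can have $k$ heavy edges into its neighbourhood forming a matching, because of greedy extension. Given heavy edges $x_1y_1,\dots,x_ky_k$ inside $N(v)$, we could greedily pick disjoint triangles. This is the wrong direction, however. Instead I use that heavy edges $uv$ inside $N(w)$ can be disjointly extended: if $v$ has $k$ heavy incident edges $vu_1,\dots,vu_k$, we choose distinct $w_i\in N(v)\cap N(u_i)$ avoiding the previously used vertices. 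Each step excludes at most $2k$ vertices, so codegree at least $2k+1$ suffices to build $F_k$ centered at $v$. Hence every vertex has fewer than $k$ heavy edges, so $|E(H)|<kn/2$.

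\textbf{Step 3: count.} Triangles containing at least one non-heavy edge number at most $\sum_{\text{light }e}\mathrm{codeg}(e)$, but the number of light edges is not bounded by $O(n)$. So I instead count each triangle via its heavy edges. A triangle whose three edges are all light has each vertex $v$ seeing it through a light edge inside $N(v)$. Using Step 1, each such triangle $vxy$ has, say, $x\in S_v$, and $\mathrm{codeg}(vx)\le 2k$. Each $v$ has at most $2k-2$ choices of $x$ and at most $2k$ choices of $y$, giving at most $4k^2 n$ such triangles. A triangle with a heavy edge $vx$ is charged to that edge. For a heavy edge, I bound its codegree by using $v$'s matching set: the common neighbours $y$ of $v,x$ satisfy that $xy$ is an edge of $N(v)$, so $x\in S_v$ or $y\in S_v$. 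Those with $y\in S_v$ number at most $2k-2$. Those with $x\in S_v$ cost $d_{N(v)}(x)$, and I bound that symmetrically using $x$'s matching set $S_x$: $y\in S_x$, or else $v\in S_x$. Both $x\in S_v$ and $v\in S_x$ can hold, and then the codegree is unbounded, for example in $K_{2}\vee \overline{K}_{n-2}$. That graph contains $F_k$, though: $v$ sees $x$ plus many vertices $y$, but its neighbourhood is a star, which is fine. So unbounded codegree is allowed, but only on fewer than $k$ heavy edges per vertex. Summing over heavy edges would then give $O(n^2)$, which is too much.

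\textbf{Main obstacle.} The main obstacle is therefore Step 3, namely bounding $\sum_{\text{heavy }e}\mathrm{codeg}(e)$ linearly. I would handle it by noting that the $y$'s in a heavy edge's common neighbourhood themselves have light edges to $v$ and $x$, except for $O(k)$ of them. So each triangle with exactly one heavy edge has two light edges. I then charge it to the endpoint $y$, where $vy$ is light, and use the bound $|S_y|\le 2k-2$ times light codegree $\le 2k$ as before. Triangles with at least two heavy edges are charged to the heavy edges at their common vertex: fewer than $k^2$ pairs per vertex, so $O(k^2 n)$. Balancing the threshold $c$ should yield a bound of the form $(9k-15)(k+1)n$.
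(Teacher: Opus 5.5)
The paper gives no proof of this statement. It is quoted from Alon and Shikhelman as background and is not used later, so your argument has to stand on its own.

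The charging scheme in your last paragraph does work, once cleaned up:
\begin{itemize}
\item Call an edge heavy if its codegree is at least $2k-1$. Your greedy construction shows that every vertex lies on at most $k-1$ heavy edges: each $w_i$ must avoid the other $u_j$ and the earlier $w_j$, which is at most $2k-2$ vertices.
\item A triangle with at most one heavy edge has a vertex $y$ whose two triangle edges are both light. The opposite edge lies in $G[N_G(y)]$, so it meets $S_y$ in some $z$. The triangle is then determined by $y$, by $z\in S_y$, and by a vertex of $N_G(y)\cap N_G(z)$. That set has at most $2k-2$ elements because $yz$ is light.
\item A triangle with at least two heavy edges is determined by a vertex where two of its heavy edges meet, together with that pair.
\end{itemize}
Thus
\[
N(K_3,G)\le\Bigl((2k-2)^2+\binom{k-1}{2}\Bigr)n=\frac{(k-1)(9k-10)}{2}\,n,
\]
and $(9k-15)(k+1)-\frac{(k-1)(9k-10)}{2}=\frac{9k^2+7k-40}{2}>0$ for all $k\ge 2$. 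No bound on the codegree of heavy edges is ever needed, so the detours in Step~3 can be deleted. Also, $K_2\vee\overline{K}_{n-2}$ is $F_k$-free for every $k\ge 2$, contrary to what you first write.

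What is missing is an actual derivation of the stated constant, and your bookkeeping as set up does not reach it. With threshold $2k+1$, you count all-light triangles and one-heavy triangles as separate cases, each costing about $(2k-2)\cdot 2k$ per vertex, plus the heavy pairs.
\begin{itemize}
\item Even with the sharp count $\binom{k-1}{2}$ of heavy pairs, this totals $8k(k-1)+\binom{k-1}{2}$ per vertex. That exceeds $(9k-15)(k+1)$ at $k=2$ ($16>9$) and at $k=3$ ($49>48$).
\item Your cruder figures $4k^2$, $4k^2$, $k^2$ only give about $9k^2n$, which is above $(9k^2-6k-15)n$ for every $k$.
\end{itemize}
You need to merge the two light cases into the single charge above, or lower the threshold to $2k-1$; either fix gives the stated bound for all $k\ge 2$.

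Your suggestion to take the threshold near $9k$ and ``balance'' is backwards. Once the threshold is at least $2k-1$, the heavy-pair term does not depend on it, while the light term grows linearly with it. At threshold $9k$ the light term alone is about $18k^2n$, already larger than $(9k-15)(k+1)n$.
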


Zhu, Chen, Gerbner, Gy\H{o}ri, and Hama Karim ~\cite{ZhuChen2023} determined $\mathrm{ex}(n,K_3,F_k)$ exactly and characterized the extremal graphs.

\begin{theorem}[Zhu, Chen, Gerbner, Gy\H{o}ri, and Hama Karim~\cite{ZhuChen2023}]\label{thm:ZhuChen2023}
Let $k \geq 3$ and $n \geq 4k^3$. 
Then
$$
\mathrm{ex}(n,K_3,F_k)=
\begin{cases}
(n-2k)k(k-1)+2\binom{k}{3}, & k \text{ odd},\\[1mm]
(n-2k+1)k\left(k-\frac{3}{2}\right)+2\binom{k-1}{3}+\left(\frac{k}{2}-1\right)^2, & k \text{ even}.
\end{cases}
$$
Moreover, the extremal graphs are characterized.
\end{theorem}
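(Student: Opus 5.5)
We prove the theorem of Zhu, Chen, Gerbner, Gy\H{o}ri, and Hama Karim by counting triangles through vertex neighbourhoods and feeding each neighbourhood into the bound of Theorem~\ref{thm:ChvatalH1976}. Let $G$ be an $n$-vertex $F_k$-free graph. For every vertex $v$ the link graph $L_v=G[N_G(v)]$ has $\nu(L_v)\le k-1$, and the number of triangles through $v$ is $e(L_v)$, so $N(K_3,G)=\frac13\sum_v e(L_v)$.

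\textbf{Lower bound.} For $k$ odd, take an extremal graph $H$ for $f(k-1,k-1)$ on vertex set $A$: it is a disjoint union of copies of $K_k$, and since $k(k-1)=\frac{k-1}{2}\cdot 2k$ one may take two disjoint copies of $K_k$, so $|A|=2k$. Join a single vertex $w$ completely to $A$, and add the remaining $n-2k-1$ vertices $B$, each also adjacent to all of $A$; more precisely, make $B\cup\{w\}$ an independent set completely joined to $A$. Each vertex of $B\cup\{w\}$ has link $H$, which has matching number $k-1$ and $k(k-1)$ edges. Each vertex of $A$ has link consisting of its $k-1$ neighbours in $A$ together with all of $B\cup\{w\}$; this link is a star $K_{k-1,n-2k}$ plus a $K_{k-1}$ on the $A$-side, whose matching number is $k-1$. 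The triangle count is $(n-2k)k(k-1)+2\binom{k}{3}$, as claimed. For $k$ even, use instead the extremal graph for $f(k-1,k-1)=k(k-\tfrac32)$, which has $2k-1$ vertices (a suitable near-regular graph of degree $k-1$ on $2k-1$ vertices with matching number $k-1$). The same join construction then gives the stated formula; the term $(\frac k2-1)^2$ counts the triangles inside $A$ for that graph.

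\textbf{Upper bound.} First apply Theorem~\ref{thm:Alon2016}: $N(K_3,G)=O(k^2n)$, so all but $O(k^3)$ vertices have $e(L_v)$ small, and in particular the number of vertices of degree at least $ck^2$ is $O(k)$. Let $A$ be the set of high-degree vertices. The key structural claim is that $|A|\le 2k$ (resp.\ $2k-1$) and that every vertex $v$ outside $A$ satisfies $e(L_v\cap A)\le f(k-1,k-1)$. The second part holds because, restricted to $A$, the link of $v$ has matching number at most $k-1$, and the degree of any $a\in A$ inside $L_v\cap A$ is at most $k-1$. The degree bound follows since otherwise $a$ would be adjacent to $v$ and to $k$ vertices of $A\cap N(v)$, and using the many common neighbours available in $A$ one builds $F_k$ centred at $v$ or at $a$ greedily. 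Theorem~\ref{thm:ChvatalH1976} then gives $f(k-1,k-1)$ triangles per low vertex with two vertices in $A$. The remaining triangles, namely those with at most one vertex in $A$ among low vertices, or entirely inside $A$, must be shown to be at most the constant term. We do this by a weight-shifting argument: any triangle with two low vertices forces a deficit in the $A$-link count of those vertices, because a low vertex of bounded degree cannot simultaneously have a large link in $A$ and extra link edges without creating a $k$-matching. Triangles inside $A$ are bounded by $2\binom{k}{3}$ via the structure of the $f$-extremal graphs.

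\textbf{Main obstacle.} The hardest step is the exact accounting of the constant term and the stability step: showing that $A$ must induce exactly the $f(k-1,k-1)$-extremal structure, and that low vertices essentially all see the same copy of it. We expect to handle this by first proving that almost all low vertices have $A$-link exactly extremal, which uses the uniqueness of extremal graphs for $f(k-1,k-1)$ when $k$ is odd (disjoint $K_k$'s). We then use $n\ge 4k^3$ to show that any deviating vertex loses at least one triangle, which outweighs any constant gain inside $A$. The even case is the more delicate of the two, because the extremal graphs for $f(k-1,k-1)$ are not unique, and the correction term $(\frac k2-1)^2$ must come out of optimizing over them.
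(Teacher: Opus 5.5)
This theorem is only quoted in the paper (from Zhu, Chen, Gerbner, Gy\H{o}ri and Hama Karim) and used as a black box. There is no proof there to compare with, so your argument has to stand on its own.

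Your lower bound is fine for odd $k$: in $2K_k\vee\overline{K_{n-2k}}$, every edge of the link of a clique vertex meets its $k-1$ clique neighbours, and every other link is $2K_k$. For even $k$ you need to name the graph on $A$. The $f(k-1,k-1)$-extremal graphs are neither unique nor all on $2k-1$ vertices: $K_{k-1}$ together with a $(k+1)$-vertex graph of maximum degree $k-1$ and $\frac{k^2-2}{2}$ edges is another one. A correct choice is $2K_{k-1}$ plus a vertex $w$ joined to $\frac k2$ vertices of the first clique and $\frac k2-1$ of the second, plus a matching from the $\frac k2-1$ remaining vertices of the first clique into the remaining vertices of the second. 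It has $2\binom{k-1}{3}+(\frac k2-1)^2$ triangles, and $(\frac k2-1)^2$ counts only those through $w$, not all triangles inside $A$.

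The upper bound is where the proof is missing, and its first step is already invalid.
\begin{itemize}
\item $N(K_3,G)=O(k^2n)$ says nothing about degrees; a complete bipartite graph is $F_k$-free. Even after deleting all edges lying in no triangle, so that $e(L_v)\ge d(v)/2$, Markov's inequality bounds the number of vertices of degree at least $ck^2$ only by $O(n/c)$, not by $O(k)$.
\item The bound $\Delta(L_v[A])\le k-1$ rests on pairs in $A$ having many common neighbours, which nothing establishes. In the paper's proof of Theorem~\ref{thm:main-reduce} this comes from $|(X\times Y)\setminus E(G)|\le\beta$, i.e.\ from $e(G)\approx n^2/4$; there is no such input here.
\item Your decomposition gives $(n-|A|)f(k-1,k-1)+N(K_3,G[A])+(\text{rest})$. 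So the claim $|A|\le 2k$ is the wrong direction: you need that $|A|<2k$ (resp.\ $2k-1$) forces a deficit at essentially every low vertex.
\item The rest, i.e.\ triangles with at least two low vertices, is the heart of the problem and is left to an unspecified weight-shifting argument. No per-vertex bound exists: the only local constraint, $\nu(L_v)\le k-1$, allows $e(L_v)$ of order $(k-1)d(v)\gg f(k-1,k-1)$ (take $L_v=K_{k-1}\vee\overline{K_{d(v)-k+1}}$).
\item Bounding $N(K_3,G[A])$ by $2\binom k3$ presupposes $\nu(G[A]),\Delta(G[A])\le k-1$. The constant also cannot be optimized piecewise: for even $k$, $K_k\cup K_{k-1}$ has $\nu,\Delta\le k-1$ and more triangles than the extremal $A$, but fewer edges.
\item The step \emph{each deviating vertex loses a triangle, which outweighs any constant gain} needs a relation between the number of deviating vertices and the size of the gain, which you do not give.
\item A stability outline of this kind gives at best $n\ge n_0(k)$ for an unspecified $n_0$, not $n\ge 4k^3$. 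The characterization of extremal graphs is not addressed.
\end{itemize}

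The missing ingredient is a concrete global mechanism, such as a charging or deletion argument, that turns $\nu(L_v)\le k-1$ into exactly $f(k-1,k-1)$ triangles per non-exceptional vertex. It must come with exact bookkeeping for the bounded exceptional set.
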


For two graphs $G_1$ and $G_2$, let $G_1\cup G_2$ denote the graph with vertex set $V(G_1)\cup V(G_2)$ and edge set $E(G_1)\cup E(G_2)$.
For a positive integer $t$, we use $tG$ to denote $t$ pairwise vertex-disjoint copies of $G$.
If $G_1$ and $G_2$ are vertex-disjoint graphs, then $G_1\vee G_2$ denotes their \emph{join}, obtained from $G_1\cup G_2$ by adding all edges between $V(G_1)$ and $V(G_2)$.

Recently, Wang, Ni, Liu, and Kang~\cite{Wang2024} studied the generalized Tur\'an number of $t$ pairwise vertex-disjoint generalized friendship graphs for $r\geq 4$.
%\begin{theorem}[Wang et al. \cite{Wang2024}]\label{thm:Wang2024}
%For integers $t\geq 1$, $k_1\geq k_2\geq \cdots\geq k_t\geq 1$, $r>k_t+1$ and $r\geq m+1\geq 4$,
%$$\mathrm{ex}\left(n,K_m,\bigcup_{i=1}^t F_{k_i}^r\right)=$$
%\end{theorem}
Motivated by this and Theorem~\ref{thm:ZhuChen2023}, we investigate the generalized Tur\'an number of $(t+1)$ vertex-disjoint friendship graphs.

To state our main results, we introduce some notation.

\begin{definition}[The graph family $\mathcal{P}_k$]
Let $\mathcal{P}_k$ be the family of all graphs $P$ with no isolated vertices such that
\begin{equation*}\label{eq:mathcal-p-family}
  \nu(P)\le k-1,\qquad \Delta(P)\le k-1,\qquad e(P)=f(k-1,k-1).
\end{equation*}
\end{definition}

Inspired by Erd\H{o}s, Füredi, Gould, and Gunderson \cite{ErdosFGG1995}, we introduce the following definition.

\begin{definition}[$k$-admissible]
Let $P, Q \in \mathcal{P}_k$, $A=V(P)$ and $B=V(Q)$, and let $R$ be a bipartite graph with parts $A$ and $B$.  
We call $(P,Q,R)$ \emph{$k$-admissible} if
\begin{equation}\label{eq:adm-A}
  d_P(a)+\nu\bigl(Q[N_R(a)]\bigr)\le k-1
  \qquad\text{for every }a\in A,
\end{equation}
and
\begin{equation}\label{eq:adm-B}
  d_Q(b)+\nu\bigl(P[N_R(b)]\bigr)\le k-1
  \qquad\text{for every }b\in B.
\end{equation}
\end{definition}

Given two vertex sets,  we write $A \times B=\{ab:a\in A, b\in B\}$. 
%We write $A \times B$ for the complete bipartite graph with parts $A$ and $B$, where $A$ and $B$ are vertex sets.
To characterize the extremal graphs in our main result, we now construct a graph $H_n(P,Q,R)$ from a $k$-admissible triple $(P,Q,R)$.

\begin{definition}
Given a $k$-admissible triple $(P,Q,R)$, we construct $H_n(P,Q,R)$ as follows.  
Take a balanced partition $X\cup Y$ of an $n$-vertex set.  
Place a copy of $P$ on a subset $A\subseteq X$ and a copy of $Q$ on a subset $B\subseteq Y$.  
All vertices in $X\setminus A$ and $Y\setminus B$ are isolated within their own parts.  
The edges between $X$ and $Y$ consist of $\bigl((X\times Y)\setminus(A\times B)\bigr)\cup E(R)$.
\end{definition}

\begin{figure}[ht]
\centering
\begin{tikzpicture}[scale=0.9]
  \draw[rounded corners] (-3,-1.4) rectangle (-0.4,1.4);
  \draw[rounded corners] (0.4,-1.4) rectangle (3,1.4);
  \node at (-1.7,1.7) {$X$};
  \node at (1.7,1.7) {$Y$};
  \draw[rounded corners,fill=gray!15] (-2.7,0.35) rectangle (-1.1,1.15);
  \draw[rounded corners,fill=gray!15] (1.1,0.35) rectangle (2.7,1.15);
  \node at (-1.9,0.75) {$A$};
  \node at (1.9,0.75) {$B$};
  \node at (-1.7,-0.65) {$X\setminus A$};
  \node at (1.7,-0.65) {$Y\setminus B$};
  \draw[very thick] (-0.4,-0.8) -- (0.4,-0.8);
  \draw[very thick] (-0.4,0.0) -- (0.4,0.0);
  \draw[dashed,thick] (-1.1,0.75) -- (1.1,0.75);
  \node at (0,-1.85) {All cross edges are $\bigl((X\times Y)\setminus(A\times B)\bigr)\cup E(R)$.};
\end{tikzpicture}
\caption{Construct $H_n(P,Q,R)$ from a $k$-admissible triple $(P,Q,R)$.}\label{fig:construction}
\end{figure}

For convenience, we let
\begin{equation}\label{eq:tau}
  \tau_R(P,Q)=
  \sum_{aa'\in E(P)} |N_R(a)\cap N_R(a')|
  +
  \sum_{bb'\in E(Q)} |N_R(b)\cap N_R(b')|
\end{equation}
and
\begin{eqnarray}\label{eq:Phi}
  \Phi(P,Q,R,t)&=&( 2f(k-1,k-1)-|A||B|+e(R))t-f(k-1,k-1)(|A|+|B|)\nonumber\\
  &&\quad +N(K_3,P)+N(K_3,Q)+\tau_R(P,Q).
\end{eqnarray}
Since $\mathcal{P}_k$ is finite and there are only finitely many choices for $R\subseteq A\times B$, the number
\begin{equation*}
  c_k^*(t)=\max\{\Phi(P,Q,R,t):(P,Q,R)\text{ is }k\text{-admissible}\}
\end{equation*}
is well defined.

We establish the following theorem, which gives the result for $\mathrm{ex}(n,K_3,(t+1)F_k)$.
\begin{theorem}\label{thm:main}
Let $k\geq 3$ and $t\geq 1$ be integers.
For sufficiently large $n$,
\begin{equation}\label{eq:main}
  \mathrm{ex}(n,K_3,(t+1)F_k)=\binom{t}{3}+(n-t)\binom{t}{2}+t\left\lfloor \frac{(n-t)^2}{4} \right\rfloor+f(k-1,k-1)(n-t)+c^*_k(t).
\end{equation}
Moreover, the extremal graphs are of the form $K_t \vee H_{n-t}(P,Q,R)$, where $(P,Q,R)$ is a $k$-admissible triple satisfying $\Phi(P,Q,R,t)=c_k^*(t)$.
\end{theorem}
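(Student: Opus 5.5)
The plan has two halves: a lower bound by direct construction, and an upper bound by a stability argument that reduces an extremal graph to the form $K_t\vee H_{n-t}(P,Q,R)$.

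\textbf{Lower bound.} First I would check that $G=K_t\vee H_{n-t}(P,Q,R)$ is $(t+1)F_k$-free for every $k$-admissible triple. Take any family of $t+1$ vertex-disjoint copies of $F_k$. At most $t$ of them meet $K_t$, so some copy lies entirely in $H=H_{n-t}(P,Q,R)$. Its center $v$ then satisfies $\nu(H[N_H(v)])\ge k$. Suppose $v\in X\setminus A$. Then $N_H(v)\subseteq Y$, and $H[Y]=Q$ has matching number at most $k-1$, a contradiction. The case $v\in Y\setminus B$ is symmetric. Suppose $v=a\in A$. Then $N_H(a)=N_P(a)\cup (Y\setminus B)\cup N_R(a)$, and a maximum matching in $H[N_H(a)]$ uses only $P$-edges inside $N_P(a)$, $R$/complete edges between $N_P(a)$ and $Y$, and $Q$-edges inside $N_R(a)$. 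Each $N_P(a)$ vertex is used at most once, so the matching has size at most $d_P(a)+\nu(Q[N_R(a)])\le k-1$ by \eqref{eq:adm-A}. The case $v\in B$ is symmetric via \eqref{eq:adm-B}.

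Next I would count triangles. Triangles meeting $K_t$ in $3$, $2$ and $1$ vertices give $\binom t3$, $(n-t)\binom t2$ and $t\,e(H)$ respectively. Here
\[
e(H)=\lfloor (n-t)^2/4\rfloor+2f(k-1,k-1)-|A||B|+e(R).
\]
The triangles inside $H$ are as follows. Each $P$-edge $aa'$ forms a triangle with every common neighbour in $Y$, giving $|Y|-|B|+|N_R(a)\cap N_R(a')|$. The $Q$-edges are handled symmetrically. On top of these come $N(K_3,P)+N(K_3,Q)$. Summing and using $|X|+|Y|=n-t$, everything collapses to \eqref{eq:main} with $\Phi(P,Q,R,t)$, so maximising over triples gives the lower bound.

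\textbf{Upper bound.} I would use induction on $t$, with the base $t=0$ given by Theorem~\ref{thm:ZhuChen2023}, or handled by the same scheme. Let $G$ be extremal. Since $N(K_3,G)\ge t n^2/4$ while $\mathrm{ex}(n,K_3,tF_k)=O(n^2)$ with a smaller leading constant, $G$ contains $tF_k$. The key steps, in order, are the following.

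(i) Show that there is a set $S$ of exactly $t$ vertices of linear degree, in fact degree $n-o(n)$. Roughly, a vertex $v$ with $\nu(G[N(v)])$ large is a potential center. If fewer than $t$ vertices have large link matching number, then deleting them leaves an $F_k$-free-type graph with $O(n)$ triangles, which contradicts the triangle count. If there are at least $t+1$ such vertices, then greedily building disjoint $F_k$'s gives $(t+1)F_k$.

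(ii) Let $G'=G-S$. Every vertex of $G'$ has $\nu(G'[N(v)])\le k-1$, since otherwise it is a center avoiding the $t$ copies at $S$; the large matchings in the links of $S$ let us reroute around a bounded set. Hence $G'$ is $F_k$-free, and $N(K_3,G)\le \binom t3+(n-t)\binom t2+t\,e(G')+N(K_3,G')$.

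(iii) Maximise $t\,e(G')+N(K_3,G')$ over $F_k$-free $G'$. This forces $e(G')$ near $\lfloor (n-t)^2/4\rfloor$, so by Theorem~\ref{the:ErdosFGG1995} and stability $G'$ is close to complete bipartite with parts $X,Y$. Within-part edges then form graphs $P$ and $Q$ on bounded sets $A,B$ with $\Delta,\nu\le k-1$. The missing cross edges sit inside $A\times B$ with the remaining ones forming $R$, and $F_k$-freeness at vertices of $A$ and $B$ gives exactly conditions \eqref{eq:adm-A} and \eqref{eq:adm-B}. The edge term weighted by $t$ together with $f(k-1,k-1)(n-t)$ forces $e(P)=e(Q)=f(k-1,k-1)$ and no isolated vertices, using Theorem~\ref{thm:ChvatalH1976}. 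The balance of $X,Y$ and the fact that $S$ is complete to everything follow by local switching, and then $\Phi\le c_k^*(t)$.

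\textbf{Main obstacle.} I expect the hardest part to be step (ii)/(iii): proving rigorously that every vertex outside $S$ has link matching number at most $k-1$, and that no configuration with degree-deficient $S$-vertices or non-bipartite-like structure beats the construction by $O(1)$. My approach would be to first prove a weak structure using stability with $o(n^2)$ error. Then I would clean up exceptional vertices one by one, comparing triangle counts with the construction up to additive constants and exploiting that each deviation costs $\Omega(n)$ triangles.
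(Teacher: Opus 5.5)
Your plan is correct and follows essentially the paper's proof. You use the same construction $K_t\vee H_{n-t}(P,Q,R)$ for the lower bound, find $t$ heavy vertices whose deletion leaves an $F_k$-free graph (Lemma~\ref{lem:stability-for-T}), and apply the bound $N(K_3,G)\le\binom t3+(n-t)\binom t2+t\,e(G')+N(K_3,G')$ to reduce to $g(k,t,n-t)$, which is then settled by bounded-error stability and extraction of a $k$-admissible triple (Theorem~\ref{thm:main-reduce}). The differences are minor: the induction on $t$ is unnecessary, since the paper works from a maximum family of disjoint copies of $F_k$ and the linear bound of Theorem~\ref{thm:ZhuChen2023}; the step you leave vague (upgrading $o(n^2)$ to $O(1)$ errors) is done in Lemma~\ref{lem:friendship-graphs-stability} via a max-cut partition and Hall's theorem; and balance of $X,Y$ and completeness of the $t$ special vertices follow from equality in $t\,e(G')+N(K_3,G')=t|X||Y|+f(k-1,k-1)(n-t)+\Phi(P,Q,R,t)$ and in the bound above, so no switching is needed.
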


It is worth noting that, different from the extremal graphs of $F_k$, 
the extremal graphs of $(t+1)F_k$ undergo a fundamental change.
Moreover, this structure is different from those of previous similar problems.

Before proving Theorem~\ref{thm:main}, we first establish the following theorem.
%In particular, Lemma~\ref{lem:stability-for-T} (see Section~\ref{Proof-main-Theorem-using-reduce-theorem}) establishes a connection between Theorem~\ref{thm:main} and the following theorem.
Let $g(k,t,n)$ be defined as follows:
$$g(k,t,n)=\max\{t e(H)+N(K_3,H): |V(H)|=n,\ H\text{ is }F_k\text{-free}\}.$$

\begin{theorem}\label{thm:main-reduce}
Let $k\geq 3$ and $t\geq 1$ be integers.
For sufficiently large $n$,
\begin{equation}\label{eq:exact-M}
g(k,t,n)=t\left\lfloor \frac{n^2}{4} \right\rfloor+f(k-1,k-1)n+c_k^*(t).
\end{equation}
Moreover, a graph $H$ attains $g(k,t,n)$ if and only if $H=H_n(P,Q,R)$ for some $k$-admissible triple $(P,Q,R)$ with $\Phi(P,Q,R,t)=c_k^*(t)$.
\end{theorem}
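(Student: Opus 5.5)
We prove the lower bound by a direct computation and the upper bound by a stability argument that ends in a finite optimization.

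\textbf{Lower bound.} First we check that $H_n(P,Q,R)$ is $F_k$-free for every $k$-admissible triple. Take a vertex $a\in A$. Its neighbourhood consists of its $P$-neighbours, the set $Y\setminus B$ and the set $N_R(a)$. Every edge inside $N(a)$ either meets $N_P(a)$ or lies in $Q[N_R(a)]$, since the vertices of $Y\setminus B$ are isolated in $Y$. So a matching in $H[N(a)]$ has at most $d_P(a)+\nu(Q[N_R(a)])\le k-1$ edges by \eqref{eq:adm-A}. Symmetrically, \eqref{eq:adm-B} handles $b\in B$. A vertex of $X\setminus A$ has an independent neighbourhood apart from edges of $Q$, so it sees a matching of size at most $\nu(Q)\le k-1$; vertices of $Y\setminus B$ are handled the same way using $\nu(P)\le k-1$.

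Next we count. The number of edges is
$$|X||Y|-|A||B|+e(R)+2f(k-1,k-1).$$
Every triangle either lies inside $P$ or $Q$, or consists of an edge $aa'$ of $P$ (respectively an edge $bb'$ of $Q$) together with a common neighbour in the opposite part. The edge $aa'$ has $|Y|-|B|+|N_R(a)\cap N_R(a')|$ such common neighbours, and similarly for $bb'$. Summing, and using $|X||Y|=\lfloor n^2/4\rfloor$ and $|X|+|Y|=n$, gives
$$t\,e(H_n(P,Q,R))+N(K_3,H_n(P,Q,R))=t\lfloor n^2/4\rfloor+f(k-1,k-1)\,n+\Phi(P,Q,R,t).$$
This establishes $\ge$ in \eqref{eq:exact-M}, and it shows the claimed graphs attain the value whenever $\Phi=c_k^*(t)$.

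\textbf{Upper bound: structure.} Let $H$ attain $g(k,t,n)$. By Theorem~\ref{thm:Alon2016}, $N(K_3,H)=O(k^2n)$. The lower bound then forces $e(H)\ge \lfloor n^2/4\rfloor-O(1)$, while Theorem~\ref{the:ErdosFGG1995} gives $e(H)\le\lfloor n^2/4\rfloor+f(k-1,k-1)$.

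By triangle removal or Erdős–Simonovits stability, $H$ is $o(n^2)$-close to $T_2(n)$. Take a maximum cut $X\cup Y$. Through the standard symmetrization argument, we next show that every vertex has degree at least $n/2-O(1)$. If some vertex $v$ had small degree, deleting $v$ and cloning a typical vertex would raise $t\,e+N(K_3)$, because the gain is about $tn/2$ and a clone creates no $F_k$ for a typical vertex. It follows that the vertices of $X$ have $n/2-O(1)$ neighbours in $Y$, and vice versa, and that $||X|-|Y||=O(1)$.

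Now suppose $v\in X$ has $k$ neighbours in $X$, or $H[X]$ contains $k$ disjoint edges. Because any $O(1)$ vertices of $X$ have $n/2-O(1)$ common neighbours in $Y$, we can greedily complete an $F_k$: in the first case centred at $v$, in the second centred at a common $Y$-neighbour of the endpoints. Hence $\Delta(H[X]),\nu(H[X])\le k-1$, so $e(H[X])\le f(k-1,k-1)$, and the same holds for $Y$.

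Let $A$ and $B$ be the non-isolated vertices of $H[X]$ and $H[Y]$. Both sets have bounded size because their edge counts are bounded and they have no isolated vertices.

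\textbf{Upper bound: exact optimization.} The decisive step is an exchange argument. Let $M$ be the set of missing cross pairs. Each missing pair outside $A\times B$ costs at least $t$, and it can be added back without creating an $F_k$, since a vertex in $(X\setminus A)\cup(Y\setminus B)$ sees no inner edges on its own side. Hence in an extremal $H$ all missing cross pairs lie in $A\times B$. Write $R=(A\times B)\setminus M$.

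Applying $F_k$-freeness at each $a\in A$, where $N(a)$ contains $N_P(a)$, $Y\setminus B$ and $N_R(a)$, each edge $aa'$ with $a'\in N_P(a)$ can be matched into $Y\setminus B$. This forces \eqref{eq:adm-A}, and symmetrically \eqref{eq:adm-B}.

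Next we show $e(P)=e(Q)=f(k-1,k-1)$ and that $|X|,|Y|$ are balanced, by comparing against the construction. An unbalanced partition loses $\Omega(t)$ per unit of imbalance in $t|X||Y|$. Replacing $P$ by a member of $\mathcal{P}_k$ gains $f(k-1,k-1)-e(P)$ times $n/2-O(1)$ in triangles, while the loss in the remaining terms is only $O(1)$. Once these hold, the exact count from the lower-bound step gives $t\,e+N(K_3)=t\lfloor n^2/4\rfloor+fn+\Phi(P,Q,R,t)\le\ldots+c_k^*(t)$, with equality characterizing $H$.

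\textbf{Main obstacle.} I expect the hardest step to be the exchange lemma that confines missing cross edges to $A\times B$ and forces $e(P)=f(k-1,k-1)$ exactly. This needs careful bookkeeping of the $O(1)$ error terms against the linear gains, and in particular a check that a suboptimal $P$ cannot be compensated by the bounded term $\tau_R+N(K_3,P)$. That check should go through because the deficit is multiplied by $n/2-O(1)$, which dominates any bounded term for $n$ sufficiently large.
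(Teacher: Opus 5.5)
Your lower bound, and the endgame once the structure is known (the $\nu,\Delta\le k-1$ argument, admissibility, the exchange, the exact count through $\Phi$), match the paper. The passage from approximate to exact structure, however, has a genuine gap.

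The decisive claim that every vertex has degree $n/2-O(1)$ does not follow from deleting a low-degree vertex $v$ and cloning a typical vertex. Deleting $v$ costs $t\,d(v)+e(H[N(v)])$, and $e(H[N(v)])$ can be about $(k-1)d(v)$. For example, take $v\in X$ with $k-1$ neighbours in $X$ that see most of $N(v)\cap Y$; these are precisely the vertices of $A$ in the extremal graphs. A clone of a typical vertex, whose link lies inside the opposite part, gains only roughly $tn/2$. So the comparison excludes only $d(v)\lesssim\frac{t}{t+k-1}\cdot\frac n2$, not $d(v)\le n/2-C$.

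Moreover, cloning does not preserve $F_k$-freeness the way it preserves $K_r$-freeness. For $w\in N(u)$, the vertex $u$ and its clone can be matched to two different vertices of $N(u)\cap N(w)$, raising $\nu(H[N(w)])$ by one. Ruling this out already requires the near-complete cross structure you are trying to prove. Your later step (``any $O(1)$ vertices of $X$ have $n/2-O(1)$ common neighbours in $Y$'') rests entirely on this degree bound.

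Relatedly, Theorem~\ref{thm:Alon2016} does not give $e(H)\ge\lfloor n^2/4\rfloor-O(1)$. Since $(9k-15)(k+1)>f(k-1,k-1)$, subtracting it from the lower bound only gives $e(H)\ge\lfloor n^2/4\rfloor-O(n)$.

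The paper avoids per-vertex degree bounds altogether:
\begin{itemize}
\item It uses the sharp bound $N(K_3,H)\le f(k-1,k-1)n+O(1)$ from Theorem~\ref{thm:ZhuChen2023} to get $e(H)\ge\lfloor n^2/4\rfloor-\alpha$ with $\alpha$ constant.
\item Lemma~\ref{lem:friendship-graphs-stability} then takes a max cut and shows that the number $r$ of missing cross pairs, the number $s$ of internal edges and the imbalance are all at most a constant $\beta$.
\item Inside that lemma, a Hall-type argument gives $\Delta(G[X])\le 4k\sqrt{r+1}$, and a common-neighbour argument gives $\nu(G[X])\le k(1+3r/n)$.
\item Together with $q+r\le s+\alpha$, these yield $s\le 16k^2\sqrt{s+\alpha+1}+(s+\alpha+1)/4$, which forces $s=O(1)$.
\end{itemize}
The global bound $r\le\beta$ gives every internal edge at least $|Y|-\beta$ common neighbours across. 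That is all that Lemma~\ref{lem:heavy-CH}, admissibility and the exchange actually use.

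One smaller point concerns the exchange. When you add back a missing pair $xy$ with $x\notin A$, checking $x$ as a centre is not enough. The new edge could create an $F_k$ centred at $y\in B$, or serve as a matching edge in the link of a common neighbour $w\in B$. Both cases are controlled by the admissibility inequality, so admissibility must be proved before the exchange, as the paper does.
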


The organization of this paper is as follows. 
We first introduce the necessary preliminaries in the next section. 
In Section~\ref{Proof-main-Theorem-using-reduce-theorem}, we prove Theorem~\ref{thm:main} using Theorem~\ref{thm:main-reduce}. 
In Section~\ref{Proof-reduce-theorem}, we establish Theorem~\ref{thm:main-reduce}. 
We conclude in Section~\ref{concluding-remarks} with a further problem.

\section{Preliminaries}
For $S\subseteq V(G)$, we write $G\setminus S$ for the induced subgraph $G[V(G)\setminus S]$. 
For subsets $X,Y\subseteq V(G)$, let $E_G(X,Y)$ denote the set of edges in $G$ with one endpoint in $X$ and the other in $Y$, and let $e_G(X,Y)=|E_G(X,Y)|$.
If $xy\notin E(G)$, we use $G+xy$ to denote the graph obtained from $G$ by adding the edge $xy$.

We shall use the following theorem of Erd\H{o}s and Gallai \cite{Erdos1959}, 
which determines the maximum number of edges in an $n$-vertex graph with matching number at most $k-1$.

\begin{lemma}[Erd\H{o}s and Gallai \cite{Erdos1959}] \label{lem:Erdos1959}
Let $G$ be a graph on $n$ vertices. If $\nu(G)\leq k-1$, then
\begin{equation*}
e(G)\leq \max\left\{\binom{2k-1}{2},\binom{k-1}{2}+(k-1)(n-k+1)\right\}.
\end{equation*}
\end{lemma}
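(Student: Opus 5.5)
The statement to prove is the Erd\H{o}s--Gallai bound, Lemma~\ref{lem:Erdos1959}. We plan to derive it from the Tutte--Berge formula. Since $\nu(G)\le k-1$, Tutte--Berge gives a set $S\subseteq V(G)$ with $|S|=s$ such that
\[
n+s-o(G\setminus S)\le 2(k-1),
\]
where $o(\cdot)$ counts odd components. We then bound $e(G)$ from above by the largest number of edges any graph can have with this Tutte--Berge structure. Adding edges never increases the deficiency $o(G\setminus S)-s$ as long as we keep the components of $G\setminus S$ fixed. So we may assume that $S$ is a clique joined completely to everything, that each component of $G\setminus S$ is a clique, and that there are no even components: an even component can be merged into an odd one, keeping the parity, and this only adds edges.

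In this reduced setting we have $q=o(G\setminus S)\ge n+s-2k+2$ odd cliques. Their sizes $n_1,\dots,n_q$ are odd and sum to $n-s$. The edge count is
\[
\binom{s}{2}+s(n-s)+\sum_i\binom{n_i}{2}.
\]
By convexity, for fixed $q$ the sum is maximised when all cliques but one are singletons, so the large clique has size $n-s-q+1$. Taking $q$ as small as possible, $q=n+s-2k+2$, the large clique has size $2k-1-2s$. Hence
\[
e(G)\le h(s):=\binom{s}{2}+s(n-s)+\binom{2k-1-2s}{2},\qquad 0\le s\le k-1 .
\]
If $q$ would be forced below $0$, or the count becomes degenerate, the bound is only easier.

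The last step is to observe that $h(s)$ is a convex quadratic in $s$: the coefficient of $s^2$ is $-\tfrac12-1+2>0$. Therefore its maximum on $[0,k-1]$ is attained at an endpoint. At $s=0$ we get $\binom{2k-1}{2}$, and at $s=k-1$ we get $\binom{k-1}{2}+(k-1)(n-k+1)$, which is exactly the claimed maximum.

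The step needing the most care is the reduction to the extremal structure. We must check that merging components and completing cliques preserves $o(G\setminus S)-s\ge n-2(k-1)$, and that the extremal choice $q=n+s-2k+2$ is feasible, i.e.\ that $n-s\ge q$. When $n<2k-1$ the first bound $\binom{2k-1}{2}$ already exceeds $\binom{n}{2}$, so that case is trivial. All remaining steps are routine convexity arguments.
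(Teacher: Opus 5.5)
The paper gives no proof of this lemma. It is quoted from Erd\H{o}s and Gallai and used only as a black box, to conclude that a link graph with many edges contains a matching of size $k$. So there is nothing to compare against, and your Tutte--Berge derivation is a correct, self-contained proof; it is the standard modern argument.

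The bound $e(G)\le\binom{s}{2}+s(n-s)+\sum_C\binom{|C|}{2}$ holds directly, since $G$ has no edges between distinct components of $G\setminus S$. So the ``adding edges preserves the deficiency'' framing is unnecessary.

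Three steps should be stated explicitly rather than waved at:
\begin{itemize}
\item For $n\ge 2k-1$ you have $q\ge n+s-2k+2\ge s+1\ge 1$. This is what guarantees an odd component exists to absorb the even ones.
\item ``Taking $q$ as small as possible'' is valid because $\binom{n-s-q+1}{2}$ decreases in $q$, while the actual $q$ is at least $n+s-2k+2$.
\item The inequality $q\le n-s$ is exactly what gives $s\le k-1$, i.e.\ $2k-1-2s\ge 1$.
\end{itemize}

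There is also a small arithmetic slip. The $s^2$-coefficient of $h$ is $\tfrac12-1+2=\tfrac32$, because $\binom{s}{2}$ contributes $+\tfrac12$, not $-\tfrac12$. Convexity is unaffected, and so are the endpoint values $h(0)=\binom{2k-1}{2}$ and $h(k-1)=\binom{k-1}{2}+(k-1)(n-k+1)$.

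Compared with the bare citation, your argument makes the two extremal configurations visible. At $s=0$ it is $K_{2k-1}$ plus isolated vertices; at $s=k-1$ it is $K_{k-1}$ joined to an independent set of size $n-k+1$. For the paper's actual uses, the graphs in question have quadratically many edges. There even the trivial bound $e(G)\le 2(k-1)(n-1)$ would suffice, since the vertices of a maximum matching cover every edge.
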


\begin{lemma}\label{lem:construction-is-free}
If $(P,Q,R)$ is $k$-admissible, then $H_n(P,Q,R)$ is $F_k$-free.
\end{lemma}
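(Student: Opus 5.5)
We use the criterion stated in the introduction: $H=H_n(P,Q,R)$ contains a copy of $F_k$ with center $v$ if and only if $H[N_H(v)]$ has a matching of size $k$. So it suffices to show $\nu(H[N_H(v)])\le k-1$ for every vertex $v$. By the symmetry between $X$ and $Y$ (and between conditions \eqref{eq:adm-A} and \eqref{eq:adm-B}), we may assume $v\in X$. We split into two cases, $v\in X\setminus A$ and $v\in A$, and in each we describe $N_H(v)$ and the edges it induces.

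\textbf{Case $v\in X\setminus A$.} Then $v$ is isolated inside $X$, so $N_H(v)=Y$. The graph $H[Y]$ is just $Q$ placed on $B$ together with isolated vertices. Hence $\nu(H[N_H(v)])=\nu(Q)\le k-1$, since $Q\in\mathcal{P}_k$.

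\textbf{Case $v=a\in A$.} Here
\[
N_H(a)=N_P(a)\cup (Y\setminus B)\cup N_R(a).
\]
The edges inside this set are of the following types:
\begin{itemize}
\item none inside $N_P(a)$ that we need to worry about, since a matching edge inside $N_P(a)$ would use two vertices of $P$;
\item edges of $Q$ inside $N_R(a)\subseteq B$;
\item cross edges between $N_P(a)\subseteq A$ and $(Y\setminus B)\cup N_R(a)$, namely all pairs with $Y\setminus B$, together with the $R$-edges from $N_P(a)$ into $N_R(a)$.
\end{itemize}
There are no edges inside $X$ other than those of $P$, and no edges inside $Y$ other than those of $Q$.

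Now take any matching $M$ in $H[N_H(a)]$. Every edge of $M$ either has an endpoint in $N_P(a)$, or lies inside $Y\cap N_H(a)$. An edge of the second kind lies in $Y$, so it is an edge of $Q$ with both ends in $N_R(a)$. Hence:
\begin{itemize}
\item the edges of $M$ meeting $N_P(a)$ number at most $|N_P(a)|=d_P(a)$;
\item the remaining edges form a matching of $Q[N_R(a)]$, so there are at most $\nu(Q[N_R(a)])$ of them.
\end{itemize}
Therefore
\[
|M|\le d_P(a)+\nu\bigl(Q[N_R(a)]\bigr)\le k-1
\]
by \eqref{eq:adm-A}.

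For $v\in Y$ the same argument applies with the roles of $X,A,P$ and $Y,B,Q$ swapped, using \eqref{eq:adm-B}. This covers every vertex, so $H_n(P,Q,R)$ is $F_k$-free.

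The main point needing care is the complete description of the edges inside $N_H(a)$, and in particular the check that no edge lies inside $Y\setminus B$ or between $Y\setminus B$ and $B$. This holds because $Y$ is independent apart from $Q$. Once that is verified, the counting bound above is immediate.
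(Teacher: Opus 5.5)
Your proof is correct and follows the paper's argument essentially step for step. You reduce to showing $\nu(H[N_H(v)])\le k-1$. For $v\in X\setminus A$ you use $N_H(v)=Y$ and $\nu(Q)\le k-1$. For $a\in A$ you split the matching edges into those meeting $N_P(a)$ and those lying in $Q[N_R(a)]$, then apply \eqref{eq:adm-A}. (Your first bullet is loosely worded: edges inside $N_P(a)$ do exist when $P$ has triangles, but your count correctly absorbs them among the at most $d_P(a)$ matching edges meeting $N_P(a)$, so nothing is lost.)
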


\begin{proof}
  Let $G=H_n(P,Q,R)$. It suffices to show that
$\nu(G[N_G(v)])\le k-1$ for every $v\in V(G)$.
Indeed, if $G$ contains a copy of $F_k$ centered at $v$, then $G[N_G(v)]$ contains a matching of size $k$.

If $x\in X\setminus A$, then $N_G(x)=Y$. Moreover, the induced subgraph $G[Y]$ is obtained from $Q$ by adding isolated vertices. Hence
$$
\nu(G[N_G(x)])=\nu(G[Y])=\nu(Q)\le k-1.
$$
The case $y\in Y\setminus B$ is symmetric.

Let $a\in A$.
Note that every edge of $G[N_G(a)]$ either has an endpoint in $N_P(a)$ or lies in $Q[N_R(a)]$.  
Hence every matching in $G[N_G(a)]$ has size at most
$$
d_P(a)+\nu(Q[N_R(a)])\le k-1
$$
by the $k$-admissibility of $(P,Q,R)$.  
The argument for vertices $b\in B$ is identical.  

Therefore no vertex is the center of a copy of $F_k$, and so $G$ is $F_k$-free.
\end{proof}

The following lemma gives the lower bound of Theorem~\ref{thm:main-reduce}.

\begin{lemma}\label{lem:lower-bound-main1}
For every $k$-admissible triple $(P,Q,R)$ and sufficiently large $n$,
$$te(H_n(P,Q,R))+N(K_3,H_n(P,Q,R))=t\left\lfloor{\frac{n^2}{4}}\right\rfloor+f(k-1,k-1)n+\Phi(P,Q,R,t).$$
Consequently, we have
$$g(k,t,n)\geq t\left\lfloor{\frac{n^2}{4}}\right\rfloor+f(k-1,k-1)n+c_k^*(t).$$
\end{lemma}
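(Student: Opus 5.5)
The plan is a direct count in $G=H_n(P,Q,R)$. I compute $e(G)$ and $N(K_3,G)$ separately, each as a function of the parameters of $(P,Q,R)$, and then compare the sum with the definition \eqref{eq:Phi} of $\Phi$.

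Throughout, write $f=f(k-1,k-1)$. Take $n$ large enough that $|X|\ge |A|$ and $|Y|\ge |B|$, so that the construction makes sense. This is possible because $\mathcal{P}_k$ is finite, so $|A|$ and $|B|$ are bounded in terms of $k$.

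\textbf{Edges.} The edges of $G$ inside $X$ are exactly $E(P)$, and those inside $Y$ are exactly $E(Q)$. Since $P,Q\in\mathcal{P}_k$, these contribute $e(P)+e(Q)=2f$ edges. The cross edges number $|X||Y|-|A||B|+e(R)$, because $E(R)\subseteq A\times B$. The partition $X\cup Y$ is balanced, so $|X||Y|=\lfloor n^2/4\rfloor$. Hence
$$e(G)=\left\lfloor \frac{n^2}{4}\right\rfloor+2f-|A||B|+e(R).$$

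\textbf{Triangles.} A triangle cannot have all three edges between $X$ and $Y$, so every triangle has at least two vertices on one side. I split into two cases.

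\emph{Case 1: all three vertices lie on one side.} These triangles lie inside $A$ or inside $B$, since the vertices of $X\setminus A$ and $Y\setminus B$ are isolated within their parts. They contribute $N(K_3,P)+N(K_3,Q)$.

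\emph{Case 2: exactly two vertices $a,a'$ lie in $X$ and one vertex $y$ lies in $Y$.} Then $aa'\in E(P)$, and $y$ must be adjacent to both $a$ and $a'$. Every $y\in Y\setminus B$ is adjacent to all of $X$, giving $|Y|-|B|$ choices. A vertex $y\in B$ is adjacent to $a$ only through $R$, giving $|N_R(a)\cap N_R(a')|$ choices. Summing over the $f$ edges of $P$, and doing the same with the roles of $X$ and $Y$ exchanged, the triangles of this type number
$$f(|Y|-|B|)+f(|X|-|A|)+\tau_R(P,Q)=fn-f(|A|+|B|)+\tau_R(P,Q).$$

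\textbf{Combining.} Adding $t\,e(G)$ to $N(K_3,G)$ gives
$$t\left\lfloor \frac{n^2}{4}\right\rfloor+fn+\bigl(2f-|A||B|+e(R)\bigr)t-f(|A|+|B|)+N(K_3,P)+N(K_3,Q)+\tau_R(P,Q).$$
By \eqref{eq:Phi}, this equals $t\lfloor n^2/4\rfloor+fn+\Phi(P,Q,R,t)$, which is the claimed identity.

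For the consequence, Lemma~\ref{lem:construction-is-free} shows that $H_n(P,Q,R)$ is $F_k$-free. So it is admissible in the definition of $g(k,t,n)$, which gives $g(k,t,n)\ge t\lfloor n^2/4\rfloor+fn+\Phi(P,Q,R,t)$. Choosing a triple that attains $c_k^*(t)$ gives the stated lower bound.

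The only real obstacle is the bookkeeping in Case 2: checking that no triangle is missed or counted twice. In particular, a triangle cannot use one edge of $P$ and one edge of $Q$, because it would need two vertices on each side. This is what makes the decomposition into the two cases exact.
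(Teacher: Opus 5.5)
Your proof is correct and follows essentially the same route as the paper. It counts $e(H_n(P,Q,R))$ directly, then counts triangles as the internal triangles of $P$ and $Q$ plus those with exactly one internal edge, whose third vertex is either outside $A\cup B$ or a common $R$-neighbour, and matches the sum against \eqref{eq:Phi}; your explicit appeal to Lemma~\ref{lem:construction-is-free} for the consequence just spells out a step the paper leaves implicit.
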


\begin{proof}
By the definition of $H_n(P,Q,R)$, we have $|X||Y|=\left\lfloor n^2/4\right\rfloor$.  
Since $e(P)=e(Q)=f(k-1,k-1)$, we obtain
\begin{align*}
e(H_n(P,Q,R))
&=|X||Y|-|A||B|+e(R)+2f(k-1,k-1)\\
&=\left\lfloor \frac{n^2}{4}\right\rfloor-|A||B|+e(R)+2f(k-1,k-1).
\end{align*}

Every edge of $P$ forms a triangle with each vertex of $Y\setminus B$ and with each common neighbor in $R$ of its two endpoints in $B$. 
The analogous statement holds for $Q$.  
Adding the internal triangles of $P$ and $Q$, we get
\begin{eqnarray*}
&&N(K_3,H_n(P,Q,R))\\
&=&f(k-1,k-1)(|Y|-|B|)+f(k-1,k-1)(|X|-|A|)+N(K_3,P)+N(K_3,Q)\\
&&+\sum_{aa'\in E(P)} |N_R(a)\cap N_R(a')|+\sum_{bb'\in E(Q)} |N_R(b)\cap N_R(b')|\\
&=&f(k-1,k-1)(n-|A|-|B|)+N(K_3,P)+N(K_3,Q)+\tau_R(P,Q).
\end{eqnarray*}
The second equality is based on \eqref{eq:tau}.
Recall that 
\begin{eqnarray*}
  \Phi(P,Q,R,t)&=&( 2f(k-1,k-1)-|A||B|+e(R))t-f(k-1,k-1)(|A|+|B|)\nonumber\\
  &&\quad +N(K_3,P)+N(K_3,Q)+\tau_R(P,Q).
\end{eqnarray*}
Hence, the sum of the two displayed formulae is exactly the asserted identity.
\end{proof}

\section{Proof of Theorem \ref{thm:main} using Theorem \ref{thm:main-reduce}}\label{Proof-main-Theorem-using-reduce-theorem}

The main strategy for proving Theorem~\ref{thm:main} is as follows.
We first apply Lemma~\ref{lem:construction-is-free} to establish the lower bound in Theorem~\ref{thm:main}.
Then, we identify a special vertex set $T$ of size $t$ such that $G\setminus T$ is $F_k$-free.
Therefore, we can derive the upper bound using Theorem~\ref{thm:main-reduce}.
Moreover, by combining the equality of the upper and lower bounds with Theorem~\ref{thm:main-reduce}, 
we characterize the extremal graphs in Theorem~\ref{thm:main}.

Next, we prove a stability lemma for $(t+1)F_k$-free graphs satisfying certain properties.

\begin{lemma}\label{lem:stability-for-T}
Let $k\geq 1$ and $t\geq 1$ be integers. 
Let $G$ be an $n$-vertex $(t+1)F_k$-free graph with 
$$ N(K_3,G)\ge \left(\frac{t-1}{4}+\delta\right)n^2,$$
where $0<\delta<1/4$ is a constant.
For sufficiently large $n$, there exists a vertex set $T\subseteq V(G)$ with $|T|=t$ such that $G\setminus T$ is $F_k$-free.
\end{lemma}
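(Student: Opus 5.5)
We argue via vertices of large "triangle degree". For $v\in V(G)$ let $t(v)$ be the number of triangles containing $v$, i.e.\ $e(G[N_G(v)])$. Fix a small constant $\varepsilon=\varepsilon(\delta,k,t)>0$ and let
\[
L=\{v: t(v)\ge \varepsilon n^2\}.
\]
The plan is to show $|L|\ge t$, to pick $T\subseteq L$ with $|T|=t$, and then to argue that $G\setminus T$ is $F_k$-free because any $F_k$ there could be completed, greedily, by $t$ further disjoint copies of $F_k$ centred at the vertices of $T$.

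\textbf{Step 1 (few triangles means bounded triangle degree).} First we bound $N(K_3,G)$ using only the vertices outside $L$. Every triangle meets $L$ or lies in $G\setminus L$. The number of triangles meeting $L$ is at most $\sum_{v\in L} t(v)\le |L|\,\binom{n-1}{2}$ trivially, but we need a sharper bound $|L|\cdot (1/4+o(1))n^2$. This follows because $G[N(v)]$ cannot contain $(t+1)$ large structures. More precisely, we want $t(v)\le n^2/4+O(n)$ for every $v$ whenever $G$ is, say, $K_{t+1+\ldots}$-restricted; this is not automatic, so instead we argue as follows. Since $G$ is $(t+1)F_k$-free, removing a maximal family of disjoint $F_k$'s (at most $t$ copies, hence at most $t(2k+1)$ vertices $S$) leaves $G\setminus S$ $F_k$-free. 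By Theorem~\ref{thm:Alon2016}, $G\setminus S$ then has $O_k(n)$ triangles. Hence all but $O(n)$ triangles contain a vertex of $S$, where $|S|\le t(2k+1)$ is bounded.

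\textbf{Step 2 (locating $T$).} Each $v\in S$ lies in at most $e(G\setminus S)+O(n)\le n^2/4+O(n)$ triangles with both other vertices outside $S$. For the upper bound $e(G\setminus S)\le n^2/4+O(n)$ we use that an $F_k$-free graph has at most $\mathrm{ex}(n,F_k)=n^2/4+O(1)$ edges by Theorem~\ref{the:ErdosFGG1995}. Let $T'=\{v\in S: t(v)\ge \varepsilon n^2\}$. Then
\[
N(K_3,G)\le |T'|\left(\tfrac14+o(1)\right)n^2+|S|\varepsilon n^2+O(n),
\]
so the hypothesis $N(K_3,G)\ge (\frac{t-1}{4}+\delta)n^2$ forces $|T'|\ge t$ once $\varepsilon\ll\delta/(tk)$. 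We choose $T\subseteq T'$ with $|T|=t$.

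\textbf{Step 3 (greedy embedding; main obstacle).} Suppose $G\setminus T$ contains a copy $F_0$ of $F_k$. We embed disjoint copies of $F_k$ centred at the vertices of $T$, one at a time, avoiding $V(F_0)$, the previously used vertices, and $T$; this set $U$ has at most $(t+1)(2k+1)$ vertices. For $v\in T$, the graph $G[N(v)\setminus U]$ still has at least $\varepsilon n^2-O(n)$ edges, since each removed vertex kills at most $n$ edges. So by Lemma~\ref{lem:Erdos1959} (for large $n$ a graph with $\nu\le k-1$ has $O(kn)$ edges) it contains a matching of size $k$. This gives an $F_k$ centred at $v$ disjoint from $U$, and altogether we obtain $(t+1)F_k$, a contradiction.

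The delicate point is Step 2: we need the per-vertex triangle count bounded by $(1/4+o(1))n^2$, so that fewer than $t$ heavy vertices cannot reach the hypothesised count. This requires controlling the triangles through $v\in S$ that use other vertices of $S$ (only $O(n)$ of them) and bounding the edges in $G\setminus S$ via the Tur\'an-type result for $F_k$. I would check this carefully, in particular that edges of $G[N(v)]$ incident to $S$ contribute only $O(n)$.
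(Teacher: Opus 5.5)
Your proposal is correct and follows essentially the same route as the paper: take a maximal family of disjoint copies of $F_k$ with vertex set $S$, and use that $G\setminus S$ is $F_k$-free to see that all but $O(n)$ triangles meet $S$. You then bound each vertex's contribution by $\mathrm{ex}(n,F_k)=n^2/4+O(1)$ to force at least $t$ heavy vertices in $S$, and finish by greedily embedding $t$ copies of $F_k$ centred at those vertices via Erd\H{o}s--Gallai. The differences from the paper are cosmetic: you measure heaviness by $e(G[N_G(v)])$ rather than $e(G[N_G(v)\setminus S])$, you cite Alon--Shikhelman instead of Zhu et al.\ (which also covers $k=2$), and you rightly skip the paper's claim $|T|\le t$, since any $t$ heavy vertices suffice.
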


\begin{proof}
Since $N(K_3,G)=\Omega(n^2)$, Theorem~\ref{thm:ZhuChen2023} implies that $G$ contains a copy of $F_k$ for sufficiently large $n$.
Let $\ell$ be the maximum number of pairwise vertex-disjoint copies of $F_k$ in $G$, 
and let $S$ be its vertex set.
Since $G$ is $(t+1)F_k$-free, we have $\ell\leq t$.
Moreover, $|S|=(2k+1)\ell$ and $G\setminus S$ is $F_k$-free.
Applying Theorem~\ref{thm:ZhuChen2023} again, we have
\begin{equation}\label{eq:pf:stability-for-T:eq1}
  N(K_3,G\setminus S)=O(n).
\end{equation}
The number of triangles intersecting $S$ in at least two vertices is $O(n)$. 
Therefore, by a straightforward counting argument,
\begin{equation}\label{eq:link-sum}
  N(K_3,G)=\sum_{v\in S} e\bigl(G[N_G(v)\setminus S]\bigr)+O(n).
\end{equation}

Define the set of vertices with high-edge in their neighborhood graphs by
$$T=\left\{v\in S: e\bigl(G[N_G(v)\setminus S]\bigr) > \frac{\delta n^2}{8k(t+1)}\right\}.$$
We now prove that $|T|=t$.

\begin{claim}\label{claim:pf:stability-for-T:claim1}
$|T|\leq t$.
\end{claim}

\begin{proof}
Assume that $|T|\geq t+1$, and choose distinct vertices $v_i\in T$ for $i\in [t+1]$. 
Since the graph $G[N_G(v_1)\setminus S]$ has more than 
$\delta n^2/(8k(t+1))>kn$ edges for sufficiently large $n$, Lemma~\ref{lem:Erdos1959} implies that it contains a matching $M_1$ of size $k$. 

We greedily construct matchings $M_1,\ldots,M_{t+1}$ in $G\setminus S$ as follows, where 
$M_i \subseteq E\bigl(G[N_G(v_i)\setminus S]\bigr)$ is a matching of size $k$, and the matchings $M_1,\ldots,M_{t+1}$ are pairwise vertex-disjoint.
Assume that we have already found $M_1,\ldots,M_j$ with these properties, where $1\leq j\leq t$.
Delete the $2kj$ vertices covered by $M_1\cup\cdots\cup M_j$ from $G[N_G(v_{j+1})\setminus S]$. 
This deletes at most $2kjn$ edges, so the remaining graph still has more than
$$
\frac{\delta n^2}{8k(t+1)}-2kjn>kn
$$
edges for sufficiently large $n$.
Thus, Lemma~\ref{lem:Erdos1959} implies that the remaining graph contains a matching $M_{j+1}$ of size $k$.

For each $i\in[t+1]$, the vertex $v_i$ together with the $k$ edges of $M_i$ forms a copy of $F_k$ centered at $v_i$.
Since the vertices $v_1,\ldots,v_{t+1}$ are distinct and the matchings $M_1,\ldots,M_{t+1}$ are pairwise vertex-disjoint in $G\setminus S$, these copies of $F_k$ are pairwise vertex-disjoint.
This contradicts the assumption that $G$ is $(t+1)F_k$-free.
\end{proof}

\begin{claim}\label{claim:pf:stability-for-T:claim2}
$|T|\geq t$.
\end{claim}
\begin{proof}
Assume that $|T|\leq t-1$. 
Since $G\setminus S$ is $F_k$-free, it follows from Theorem~\ref{the:ErdosFGG1995} that 
$$e\bigl(G[N_G(v)\setminus S]\bigr)\leq \frac{n^2}{4}+O(n)$$
for every $v\in T$.
By the definition of $T$, for every $v\in S\setminus T$, we have
$$e\bigl(G[N_G(v)\setminus S]\bigr)\leq \frac{\delta n^2}{8k(t+1)}.$$
Since $|S|\leq (2k+1)(t+1)$ and $|T|\leq t-1$, we deduce from \eqref{eq:link-sum} that 
\begin{align*}
N(K_3,G)
&=\sum_{v\in S} e\bigl(G[N_G(v)\setminus S]\bigr)+O(n)\\
&=\sum_{v\in T} e\bigl(G[N_G(v)\setminus S]\bigr)
  +\sum_{v\in S\setminus T} e\bigl(G[N_G(v)\setminus S]\bigr)+O(n)\\
&\leq \frac{|T|}{4}n^2+\frac{|S|-|T|}{8k(t+1)}\delta n^2+O(n)\\
%&\leq \frac{|T|}{4}n^2+\frac{|S|}{8k(t+1)}\delta n^2+O(n)\\
&\leq \frac{t-1}{4}n^2+\frac{2k+1}{8k}\delta n^2+O(n)\\
&< \left(\frac{t-1}{4}+\delta\right)n^2
\end{align*}
for sufficiently large $n$. 
This contradicts the hypothesis that
$$N(K_3,G)\geq \left(\frac{t-1}{4}+\delta\right)n^2.$$
Thus $|T|\geq t$.
\end{proof}

Hence, $|T|=t$.
Without loss of generality, assume that $T=\{u_1,\ldots,u_t\}$.
Then, for every $u_i\in T$, we have
\begin{equation}\label{eq:u-quadratic-link}
  e\bigl(G[N_G(u_i)\setminus S]\bigr)=\Omega(n^2).
\end{equation}

Now we show that $G\setminus T$ is $F_k$-free. 
Suppose not, and let $\Lambda_0$ be a copy of $F_k$ in $G\setminus T$. 
For each $i\in[t]$, we greedily choose a matching $M_i$ of size $k$ in $G[N_G(u_i)\setminus S]$ so that the matchings $M_1,\ldots,M_t$ are pairwise vertex-disjoint and are also disjoint from $\Lambda_0$.
Indeed, at step $i$, deleting $V(\Lambda_0)\cup S$ together with the vertices covered by the previously chosen matchings removes only $O(n)$ edges from $G[N_G(u_i)\setminus S]$. 
By \eqref{eq:u-quadratic-link}, the remaining graph still has $\Omega(n^2)$ edges, and hence contains a matching of size $k$ by Lemma~\ref{lem:Erdos1959}. 

For each $i\in[t]$, the matching $M_i$, together with the center $u_i$, forms a copy of $F_k$. 
These $t$ copies are pairwise vertex-disjoint and are disjoint from $\Lambda_0$. 
Thus $G$ contains $t+1$ pairwise vertex-disjoint copies of $F_k$, contradicting the assumption that $G$ is $(t+1)F_k$-free.
Therefore, $G\setminus T$ is $F_k$-free.
\end{proof}

{\bf Proof of Theorem~\ref{thm:main} using Theorem~\ref{thm:main-reduce}.}
Let $(P,Q,R)$ be a $k$-admissible triple with $\Phi(P,Q,R,t)=c_k^*(t)$.
Then $H_{n-t}(P,Q,R)$ is $F_k$-free by Lemma~\ref{lem:construction-is-free}.
Lemma~\ref{lem:lower-bound-main1} shows that
\begin{align*}
&t e(H_{n-t}(P,Q,R))+N(K_3,H_{n-t}(P,Q,R))\\
&=t\left\lfloor \frac{(n-t)^2}{4} \right\rfloor+f(k-1,k-1)(n-t)+\Phi(P,Q,R,t) \\
&=t\left\lfloor \frac{(n-t)^2}{4} \right\rfloor+f(k-1,k-1)(n-t)+c_k^*(t).
\end{align*}
Hence $K_t \vee H_{n-t}(P,Q,R)$ is $(t+1)F_k$-free. 
%since the $t$ special vertices can each belong to at most one copy.
Indeed, after deleting the $t$ vertices of $K_t$, the remaining graph is $F_k$-free; therefore every copy of $F_k$ in $K_t \vee H_{n-t}(P,Q,R)$ must contain at least one vertex of $K_t$, 
and so there can be at most $t$ pairwise vertex-disjoint copies of $F_k$.
Moreover,
\begin{align*}
&N(K_3,K_t\vee H_{n-t}(P,Q,R))\\
&=\binom{t}{3}+(n-t)\binom{t}{2}
  +t e(H_{n-t}(P,Q,R))+N(K_3,H_{n-t}(P,Q,R))\\ 
&=\binom{t}{3}+(n-t)\binom{t}{2}
  +t\left\lfloor \frac{(n-t)^2}{4} \right\rfloor
  +f(k-1,k-1)(n-t)+c_k^*(t).
\end{align*}
Thus
\begin{equation}\label{eq:pf:main-lower}
\mathrm{ex}(n,K_3,(t+1)F_k)
\geq \binom{t}{3}+(n-t)\binom{t}{2}
+t\left\lfloor \frac{(n-t)^2}{4} \right\rfloor
+f(k-1,k-1)(n-t)+c_k^*(t).
\end{equation}

Conversely, let $G$ be an extremal $n$-vertex $(t+1)F_k$-free graph. 
By the lower bound above, $G$ has at least
$$t\left\lfloor \frac{(n-t)^2}{4} \right\rfloor+O(n)$$
triangles. 
In particular, for sufficiently large $n$
$$N(K_3,G)\ge \left(\frac{t-1}{4}+\frac{1}{5}\right)n^2.$$
Applying Lemma~\ref{lem:stability-for-T} with $\delta=1/5$, there exists a vertex set $T\subseteq V(G)$ with $|T|=t$ such that $G\setminus T$ is $F_k$-free.
Then
\begin{align}\label{eq:pf:main-upper}
N(K_3,G)
&\leq \binom{|T|}{3}+(n-|T|)\binom{|T|}{2}+ |T| e\bigl(G\setminus T\bigr)+N(K_3,G\setminus T)\nonumber\\
&\leq \binom{t}{3}+(n-t)\binom{t}{2}+t e\bigl(G\setminus T\bigr)+N(K_3,G\setminus T)\nonumber\\
&\leq \binom{t}{3}+(n-t)\binom{t}{2}+g(k,t,n-t)\nonumber\\
&\leq \binom{t}{3}+(n-t)\binom{t}{2}+t\left\lfloor \frac{(n-t)^2}{4} \right\rfloor+f(k-1,k-1)(n-t)+c_k^*(t).
\end{align}
The last inequality is due to Theorem \ref{thm:main-reduce}.
Together with \eqref{eq:pf:main-lower}, this proves \eqref{eq:main}.

Moreover, all inequalities in \eqref{eq:pf:main-upper} must be equalities. 
It follows from Theorem~\ref{thm:main-reduce} that
$G\setminus T=H_{n-t}(P',Q',R')$ for some $k$-admissible triple $(P',Q',R')$ with
$\Phi(P',Q',R',t)=c_k^*(t)$.
The equality in the first and second lines of \eqref{eq:pf:main-upper} further implies that
$G=K_t\vee (G\setminus T)$.
Hence
$$G=K_t\vee H_{n-t}(P',Q',R').$$
This completes the proof. 
\qed

\section{Proof of Theorem \ref{thm:main-reduce}}\label{Proof-reduce-theorem}
This section is organized into two parts. 
In the first part, we give some key lemmas; in the second, we prove Theorem~\ref{thm:main-reduce}.

\subsection{Key lemmas}
In extremal graph theory, graphs for which a parameter is close to its maximum often exhibit a similar structure.
This phenomenon is known as stability.
A classical theorem is given by Erd\H{o}s-Simonovits \cite{Erdos1966,Erdos1967,Simonovits1974}.

\begin{theorem}[Erd\H{o}s-Simonovits stability theorem \cite{Erdos1966,Erdos1967,Simonovits1974}]\label{the:Erdos-Simonovits-stability-theorem}
For every fixed graph $F$ with $\chi(F)=3$ and every $\varepsilon>0$, there exists a constant $\delta=\delta(F,\varepsilon)>0$ such that, for all sufficiently large $n$, the following holds.  
If $G$ is an $n$-vertex $F$-free graph with 
$$e(G)\ge \left(\frac{1}{4}-\delta\right)n^2,$$
then there is a partition $V(G)=U\cup W$ such that
$$e(G[U])+e(G[W])\le \varepsilon n^2.$$  
\end{theorem}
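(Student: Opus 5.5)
The plan is to reduce to the triangle case using the removal lemma, and then handle triangle-free graphs with a short maximum-degree argument in the style of Füredi. Fix $F$ with $\chi(F)=3$. Then $F\subseteq K_{s,s,s}$ for $s=|V(F)|$, since a proper $3$-colouring of $F$ embeds it into the complete tripartite graph with parts of size $s$.

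\textbf{Step 1: few triangles.} I first show that an $F$-free $G$ has $N(K_3,G)=o(n^3)$. Suppose instead that $N(K_3,G)\ge \gamma n^3$ for a fixed $\gamma>0$. Consider the $3$-uniform hypergraph on $V(G)$ whose edges are the triangles of $G$. It has density $\Omega(1)$. By Erdős's theorem on complete $3$-partite $3$-graphs (the hypergraph Kővári–Sós–Turán bound), for $n$ large it contains a complete $3$-partite $3$-graph $K^{(3)}(s,s,s)$. Its three parts $V_1,V_2,V_3$ have the property that every transversal $\{x_1,x_2,x_3\}$ spans a triangle of $G$. Hence every pair between distinct parts is an edge of $G$, so $G\supseteq K_{s,s,s}\supseteq F$, a contradiction. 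Therefore for every $\gamma>0$ and $n\ge n_0(\gamma)$, $G$ has fewer than $\gamma n^3$ triangles.

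\textbf{Step 2: removal.} By the triangle removal lemma, for every $\eta>0$ there is $\gamma>0$ such that any graph with fewer than $\gamma n^3$ triangles can be made triangle-free by deleting at most $\eta n^2$ edges. Let $G'$ be the resulting triangle-free graph. Then
$$e(G')\ge \left(\tfrac14-\delta-\eta\right)n^2.$$

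\textbf{Step 3: stability for triangle-free graphs.} Let $v$ be a vertex of maximum degree $\Delta$ in $G'$, and put $W=N_{G'}(v)$ and $U=V\setminus W$. Since $G'$ is triangle-free, $W$ is independent, so every edge of $G'$ meets $U$. Summing degrees over $U$ counts each edge inside $U$ twice and every other edge once, which gives
$$e(G')\le \sum_{u\in U}d_{G'}(u)-e(G'[U])\le |U|\,\Delta-e(G'[U])=|U||W|-e(G'[U])\le \tfrac{n^2}{4}-e(G'[U]).$$
Hence $e(G'[U])+e(G'[W])\le n^2/4-e(G')\le(\delta+\eta)n^2$. Returning to $G$ adds back at most $\eta n^2$ edges, so
$$e(G[U])+e(G[W])\le(\delta+2\eta)n^2.$$

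\textbf{Choice of constants.} Given $\varepsilon$, I take $\eta=\varepsilon/3$ and $\delta=\varepsilon/3$. Then I choose $\gamma$ from the removal lemma for this $\eta$, and finally $n$ large enough for Steps 1 and 2.

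The main obstacle is Step 2. The removal lemma is a deep regularity-based tool, and I would cite it rather than reprove it. Step 1 is also a cited ingredient, though a standard one. Alternatively, the whole argument can be replaced by the classical citation to Erdős and Simonovits, which is what the paper itself does here.
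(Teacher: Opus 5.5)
Your argument is correct, but it is not what the paper does: the paper gives no proof and simply cites Erd\H{o}s and Simonovits, whose original arguments (1966--1968) predate the regularity and removal lemmas.

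Your route is the standard modern reduction of the $\chi(F)=3$ case to triangle-free graphs. You embed $F\subseteq K_{s,s,s}$ and use Erd\H{o}s's theorem on $K^{(3)}(s,s,s)$ to get $o(n^3)$ triangles. You then apply triangle removal to pass to a triangle-free $G'$. Finally you use F\"uredi's maximum-degree bound $e(G')\le |U||W|-e(G'[U])$ with $W=N_{G'}(v)$. Each step checks out, and so does the order of constants ($\delta=\eta=\varepsilon/3$, then $\gamma$, then $n$).

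What your approach buys is a short modular proof with the explicit value $\delta=\varepsilon/3$, independent of $F$. The cost is that the removal lemma forces a tower-type threshold on $n$. That is harmless here, since the paper only uses the qualitative statement.

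In the one case the paper actually needs, $F=F_k$, your Steps~1--2 become trivial. By Theorem~\ref{thm:Alon2016} an $F_k$-free graph has $O(n)$ triangles, so deleting one edge from each triangle already gives a triangle-free $G'$ with $e(G')\ge e(G)-O(n)$. Your Step~3 then yields the partition with $\delta=\varepsilon/2$. Under the hypothesis $e(G)\ge\lfloor n^2/4\rfloor-\alpha$ of Lemma~\ref{lem:friendship-graphs-stability}, it even gives $e(G[U])+e(G[W])=O(n)$ directly. So in that lemma the citation of the Erd\H{o}s--Simonovits theorem could be replaced by a few elementary lines.
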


We apply this theorem only with $F=F_k$ to obtain the following stability result for friendship graphs.

\begin{lemma}[Stability result for friendship graphs]\label{lem:friendship-graphs-stability}
For every fixed $k\geq 3$ and every fixed $\alpha\geq 0$, 
there exists a constant $\beta=\beta(k,\alpha)$ such that, 
for sufficiently large $n$, the following holds.  
If $G$ is an $n$-vertex $F_k$-free graph with 
\begin{equation}\label{eq:friendship-graphs-stability-hypothesis}
  e(G)\ge \left\lfloor \frac{n^2}{4}\right\rfloor-\alpha,
\end{equation}
then there is a partition $V(G)=X\cup Y$ such that
\begin{equation}\label{eq:friendship-graphs-stability-upper-bounded}
  \bigl||X|-|Y|\bigr|\le \beta,
  \qquad
  e(G[X])+e(G[Y])\le \beta,
  \qquad
  |(X\times Y)\setminus E(G)|\le \beta.
\end{equation}
\end{lemma}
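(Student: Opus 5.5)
We start from Theorem~\ref{the:Erdos-Simonovits-stability-theorem} with $F=F_k$ (which has $\chi(F_k)=3$) and a small $\varepsilon>0$ fixed in terms of $k$. Since $e(G)\ge \lfloor n^2/4\rfloor-\alpha\ge (1/4-\delta)n^2$ for large $n$, we obtain a partition $V(G)=U\cup W$ with $e(G[U])+e(G[W])\le\varepsilon n^2$. Among all partitions $X\cup Y$ of $V(G)$ we then take one maximizing $e_G(X,Y)$. This partition still has at most $\varepsilon n^2$ edges inside the parts, and every vertex has at least as many neighbours across as on its own side. The rest of the proof upgrades the $\varepsilon n^2$ bounds to constants $\beta$ by using $F_k$-freeness directly.

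\emph{Step 1: balance and missing cross edges at the $\varepsilon$-level.} We have
\[
e(G)\le |X||Y|-m+e(G[X])+e(G[Y]),
\]
where $m=|(X\times Y)\setminus E(G)|$. Combining this with $e(G)\ge n^2/4-\alpha-1$ gives $|X||Y|\ge n^2/4-\varepsilon n^2-O(1)$, so $\bigl||X|-|Y|\bigr|\le 2\sqrt{\varepsilon}\,n$, and also $m\le \varepsilon n^2+O(1)$.

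\emph{Step 2: the internal edges are $O(1)$.} This is the key step and the main obstacle. Call a vertex \emph{bad} if it has more than $\gamma n$ non-neighbours across, with $\gamma$ small compared to $1$ but large compared to $\varepsilon$. Since $m\le \varepsilon n^2$, there are at most $(\varepsilon/\gamma)n$ bad vertices.

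First, a bad vertex $v\in X$ has at most $(1/2+\sqrt\varepsilon)n$ cross neighbours. If it also had at least $\gamma' n$ neighbours in $X$, then $N(v)$ would induce a dense bipartite subgraph containing a $k$-matching. Indeed, among typical good vertices the bipartite graph between $N(v)\cap X$ and $N(v)\cap Y$ is almost complete, so a $k$-matching exists and yields $F_k$ centred at $v$. Hence every vertex of $X$ has at most $O(1)$ neighbours of a suitable kind inside $X$. The cleaner route we will use is the following.

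Consider any vertex $v$ and set $d_X(v)=|N(v)\cap X|$ and $d_Y(v)=|N(v)\cap Y|$; say $v\in X$. Max-cut gives $d_Y(v)\ge d_X(v)$. Suppose $v$ has $2k$ neighbours $x_1,\dots,x_{2k}$ in $X$ that are good (each missing at most $\gamma n$ cross edges). The common neighbourhood of these $x_i$ together with $N(v)\cap Y$ is then large provided $d_Y(v)\ge 3k\gamma n$. In that case we can pick distinct $y_i\in N(v)\cap Y\cap N(x_i)$, and the pairs $x_iy_i$ for $i=1,\dots,k$ form a $k$-matching in $G[N(v)]$, a contradiction. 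This bounds the number of good internal neighbours of any vertex with non-tiny cross degree by $2k$.

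Next we control vertices of small cross degree and the bad vertices. A vertex with cross degree below $n/3$ has degree below $2n/3$, so deleting it from $G$ leaves an $F_k$-free graph on $n-1$ vertices. Since $\mathrm{ex}(n-1,F_k)=\lfloor (n-1)^2/4\rfloor+O(1)$ by Theorem~\ref{the:ErdosFGG1995}, each such deletion loses at least $n/2-O(1)$ from the surplus over the Turán bound. We start from $e(G)\ge\lfloor n^2/4\rfloor-\alpha$ and delete low-degree vertices repeatedly; if more than a bounded number $C(k,\alpha)$ of them existed, we would reach a graph exceeding $\mathrm{ex}$. The same degree-deletion argument applied to bad vertices (each has degree at most $(1/2+\sqrt\varepsilon-\gamma)n+d_X(v)$, and the good-neighbour bound keeps $d_X(v)$ small) shows there are only $O(1)$ bad vertices. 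Altogether each vertex has at most $2k+O(1)$ internal neighbours, and the internal edges live on a bounded number of high-internal-degree vertices plus a graph of bounded maximum degree. A further matching argument shows that $G[X]$ and $G[Y]$ have bounded matching number: $t$ disjoint internal edges in $X$ together with the common cross neighbourhoods force extra edges to be missing or a copy of $F_k$. With Lemma~\ref{lem:Erdos1959}, this gives $e(G[X])+e(G[Y])\le\beta_1$.

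\emph{Step 3: finishing the counting.} With the internal edges bounded by $\beta_1$, we have
\[
\lfloor n^2/4\rfloor-\alpha\le e(G)\le |X||Y|-m+\beta_1.
\]
Since $|X||Y|\le\lfloor n^2/4\rfloor$, this gives $m\le\alpha+\beta_1$. Writing $|X|=n/2+s$, the same inequality gives $s^2\le \alpha+\beta_1+1$, so $\bigl||X|-|Y|\bigr|\le 2\sqrt{\alpha+\beta_1+1}$. Taking $\beta$ to be the maximum of these constants completes the proof. The delicate part is Step 2, namely converting the $\varepsilon n^2$ internal-edge bound into an absolute constant. We handle it by first removing, via the surplus counting against Theorem~\ref{the:ErdosFGG1995}, the boundedly many vertices of abnormal degree, and then exploiting the near-completeness of the cross edges at the remaining vertices to turn any large internal matching into a $k$-matching in some neighbourhood.
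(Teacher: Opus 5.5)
Your Step 2, which you correctly identify as the crux, rests on a deletion estimate that is false as stated. Deleting $v$ changes the surplus $e(G)-\lfloor n^2/4\rfloor$ by exactly $\lfloor n/2\rfloor-d(v)$. Comparing $G-v$ with $\mathrm{ex}(n-1,F_k)=\lfloor (n-1)^2/4\rfloor+f(k-1,k-1)$ therefore excludes only vertices with $d(v)<\lfloor n/2\rfloor-\alpha-f(k-1,k-1)$. The gain is $n/2-O(1)$ only for vertices of bounded degree.

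This causes three problems. First, a vertex of cross degree just below $n/3$ may have degree close to $2n/3$, and deleting it lowers the surplus by about $n/6$, so your argument says nothing about such vertices. Second, even granting that only $O(1)$ vertices have small cross degree, their internal degrees could be linear, so ``every vertex has at most $2k+O(1)$ internal neighbours'' does not follow. Third, the bad-vertex step is miscalibrated in the same way: the bound $(1/2+\sqrt\varepsilon-\gamma)n+2k+(\varepsilon/\gamma)n$ falls below $n/2$ only if $\gamma\gg\sqrt\varepsilon$, not merely $\gamma\gg\varepsilon$. Finally, the closing ``further matching argument'' is too vague to give a constant. Counting the cross non-edges forced by groups of $k$ disjoint internal edges, as the paper does, only yields $\nu(G[X])=O(m/n)=O(\varepsilon n)$.

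The idea becomes sound once the threshold is corrected, and it then gives a proof genuinely different from the paper's.
\begin{itemize}
\item The deletion argument yields $\delta(G)\ge\lfloor n/2\rfloor-\alpha-f(k-1,k-1)$. By the max-cut property, every vertex then has cross degree at least $n/4-O(1)$, so your good-neighbour argument applies to every vertex and gives at most $k-1$ good internal neighbours.
\item Take $\gamma=\varepsilon^{1/3}$. There are at most $3\varepsilon^{2/3}n$ bad vertices, so a bad vertex would have degree at most $(1/2-\gamma/2)n+O(1)$. This contradicts the minimum degree, so there are no bad vertices at all.
\item Hence $\Delta(G[X]),\Delta(G[Y])\le k-1$. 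Any $k$ disjoint internal edges in $X$ have at least $|Y|-2k\gamma n>0$ common neighbours in $Y$, producing an $F_k$, so $\nu(G[X]),\nu(G[Y])\le k-1$.
\item The internal edges then number at most $4(k-1)^2$, and your Step 3 finishes.
\end{itemize}

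The paper never uses Theorem~\ref{the:ErdosFGG1995} in this lemma. Write $r$ for the number of missing cross edges (your $m$) and $s$ for the number of internal edges. The paper proves $\Delta(G[X])\le 4k\sqrt{r+1}$ via Hall's theorem and the max-cut degree condition, and $\nu(G[X])\le k(1+3r/n)$ via forced missing edges. It then closes the self-referential system $s\le 2\nu\Delta$, $r\le s+\alpha$ to get $s=O(1)$. The corrected version of your route is shorter and yields the sharp bounds $\nu,\Delta\le k-1$ directly, at the cost of invoking the exact value of $\mathrm{ex}(n,F_k)$.
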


\begin{proof}
Choose a constant $\varepsilon>0$ such that
\begin{equation}\label{eq:pf:friendship-graphs-stability-eps}
  0<\varepsilon<\frac{1}{100}
  \qquad\text{and}\qquad
  48k^2\sqrt{2\varepsilon}\le \frac{1}{4}.
\end{equation}
For this $\varepsilon$, let $\delta=\delta(F_k,\varepsilon)$ be the constant given by Theorem~\ref{the:Erdos-Simonovits-stability-theorem}.  
Choose $n$ sufficiently large so that Theorem~\ref{the:Erdos-Simonovits-stability-theorem} applies with this choice of $\varepsilon$ and $\delta$, and
\begin{equation}\label{eq:pf:friendship-graphs-stability-n0}
  \left\lfloor \frac{n^2}{4} \right\rfloor-\alpha\ge \left(\frac{1}{4}-\delta\right)n^2,
  \qquad
  \alpha+1\le \varepsilon n^2,
  \qquad \text{and} \qquad 
  \frac{n^2}{36}-1>2\varepsilon n^2.
\end{equation}

Take a partition $V(G)=X\cup Y$ that maximizes $e_G(X,Y)$. 
For convenience, let
$$  s=e(G[X])+e(G[Y]),
  \qquad
  r=|(X\times Y)\setminus E(G)|,
  \qquad
q=\left\lfloor{\frac{n^2}{4}}\right\rfloor-|X||Y|.$$
Then $q\geq 0$, and the exact edge count is
$$e(G)=|X||Y|-r+s=\left\lfloor \frac{n^2}{4}\right\rfloor-q-r+s.$$
Combining this with \eqref{eq:friendship-graphs-stability-hypothesis} yields
\begin{equation}\label{eq:r-q-simple}
  q+r\leq s+\alpha.
\end{equation}

For $x\in X$ and $y\in Y$, let $d_{G[Y]}(x)=|N_{G}(x)\cap Y|$ and $d_{G[X]}(y)=|N_{G}(y)\cap X|$.
\begin{claim}\label{claim:pf:friendship-graphs-stabilitymaxcut-degree}
For every $x\in X$ and every $y\in Y$,
\[
d_{G[X]}(x)\le d_{G[Y]}(x)
\qquad\text{and}\qquad
d_{G[Y]}(y)\le d_{G[X]}(y).
\]
\end{claim}

\begin{proof}
Otherwise, suppose that there exists $x\in X$ such that $d_{G[X]}(x)>d_{G[Y]}(x)$.
Then moving $x$ from $X$ to $Y$ would increase the number of crossing edges by
$d_{G[X]}(x)-d_{G[Y]}(x)$, contradicting the maximality of $e_G(X,Y)$.
Similarly, we have $d_{G[Y]}(y)\le d_{G[X]}(y)$ for every $y\in Y$.
\end{proof}

We first get weak bounds for $r$ and $s$.
\begin{claim}\label{claim:pf:friendship-graphs-stability-bound-of-rs}
$r\le 2\varepsilon n^2$ and $q\le 2\varepsilon n^2$.
\end{claim}
\begin{proof}
By \eqref{eq:friendship-graphs-stability-hypothesis} and \eqref{eq:pf:friendship-graphs-stability-n0}, 
the Erd\H{o}s-Simonovits stability theorem gives a partition $U\cup W$ such that
$$e(G[U])+e(G[W])\le \varepsilon n^2.$$
For every bipartition $A\cup (V(G)\setminus A)$, we have
$$e(G)=e_G(A,V(G)\setminus A)+e(G[A])+e(G[V(G)\setminus A]).$$
Since the partition $X\cup Y$ maximizes $e_G(X,Y)$, the value $e(G[X])+e(G[Y])$ is minimized among all bipartitions of $V(G)$.
Therefore,
$$s=e(G[X])+e(G[Y])\le \varepsilon n^2.$$
Together with \eqref{eq:r-q-simple}, this gives
$$q+r\le s+\alpha\le \varepsilon n^2+\alpha.$$
Since $\alpha$ is fixed, for sufficiently large $n$ we have $\alpha\le \varepsilon n^2$, and hence
$$q+r\le 2\varepsilon n^2.$$
In particular, $r\le 2\varepsilon n^2$ and $q\le 2\varepsilon n^2$.
\end{proof}

\begin{claim}\label{claim:pf:friendship-graphs-stability-XYsize}
$\min\{|X|,|Y|\}\ge n/3$.
\end{claim}

\begin{proof}
Suppose, to the contrary, that $\min\{|X|,|Y|\}<n/3$.  
Without loss of generality, assume that $|X|<n/3$. 
Then $|X||Y|=|X|(n-|X|)<2n^2/9$, and hence
$$q=\left\lfloor \frac{n^2}{4}\right\rfloor-|X||Y|
    >\frac{n^2}{4}-1-\frac{2n^2}{9}
    =\frac{n^2}{36}-1>2\varepsilon n^2,$$
where the last inequality follows from \eqref{eq:pf:friendship-graphs-stability-n0}.
This leads to a contradiction with Claim \ref{claim:pf:friendship-graphs-stability-bound-of-rs}.
\end{proof}

The following claim bounds the maximum degrees inside the parts.

\begin{claim}\label{claim:pf:friendship-graphs-stability-max-degree}
$\Delta(G[X])\leq 4k\sqrt{r+1}$ and $\Delta(G[Y])\leq 4k\sqrt{r+1}$.
\end{claim}

\begin{proof}
Since $X$ and $Y$ are symmetric, we only need to prove that $\Delta(G[X])\leq 4k\sqrt{r+1}$.
It is enough to show that, for any fixed vertex $v\in X$, we have $d_{G[X]}(v)\leq 4k\sqrt{r+1}$.
If $d_{G[X]}(v)<2k$, then $d_{G[X]}(v)\leq 2k\leq 4k\sqrt{r+1}$, and we are done.
Thus, we may assume that $d_{G[X]}(v)\geq 2k$.  
Choose $\left\lfloor d_{G[X]}(v)/k\right\rfloor$ pairwise disjoint subsets
$$
S_1,\ldots,S_{\left\lfloor d_{G[X]}(v)/k\right\rfloor}\subseteq N_{G[X]}(v),
$$
each of size $k$.

Fix one such set $S_i$. 
For each $u\in S_i$, define
$$Y_u=N_G(v)\cap N_G(u)\cap Y.$$
We claim that the family $\{Y_u:u\in S_i\}$ has no system of distinct representatives.
Otherwise, choosing distinct vertices $w_u\in Y_u$ for all $u\in S_i$ would give $k$ pairwise disjoint edges $uw_u$ in $G[N_G(v)]$.
This would yield a copy of $F_k$ centered at $v$, a contradiction.

By Hall's theorem, there exists a nonempty set $I_i\subseteq S_i$ such that
\begin{equation}\label{eq:pf:friendship-graphs-stability-Hall}
  \left|\bigcup_{u\in I_i}Y_u\right|\le |I_i|-1\le |S_i|-1 = k-1.
\end{equation}
Since $w\notin Y_u$ implies $w\notin N_G(u)$ for every $w\in N_G(v)\cap Y$, every vertex
$$w\in \left(N_G(v)\cap Y\right) \setminus \bigcup_{u\in I_i}Y_u$$
is non-adjacent to every vertex of $I_i$.
Thus $I_i$ contributes at least
$$|N_G(v)\cap Y|-\left|\bigcup_{u\in I_i}Y_u\right|\ge d_{G[Y]}(v)-(k-1)$$
missing crossing edges, i.e., edges in $(X\times Y)\setminus E(G)$, by \eqref{eq:pf:friendship-graphs-stability-Hall}.
By Claim~\ref{claim:pf:friendship-graphs-stabilitymaxcut-degree}, since $v\in X$, we have $d_{G[Y]}(v)\ge d_{G[X]}(v)$. 
So
$$|N_G(v)\cap Y|-\left|\bigcup_{u\in I_i}Y_u\right|\ge d_{G[X]}(v)-(k-1).$$

Since the sets $I_i$ are pairwise disjoint for different indices $i$, the missing crossing edges counted for different indices $i$ are distinct. 
Thus
$$r\ge \left\lfloor \frac{d_{G[X]}(v)}{k}\right\rfloor\left(d_{G[X]}(v)-k+1\right).$$
Since $d_{G[X]}(v)\ge 2k$, we have
$$
\left\lfloor \frac{d_{G[X]}(v)}{k}\right\rfloor
\ge \frac{d_{G[X]}(v)}{2k}
\qquad\text{and}\qquad
d_{G[X]}(v)-k+1\ge \frac{d_{G[X]}(v)}{2}.$$
Therefore,
$$
r\ge 
\left\lfloor \frac{d_{G[X]}(v)}{k}\right\rfloor
\left(d_{G[X]}(v)-k+1\right)
\ge \frac{d_{G[X]}(v)^2}{4k},
$$
which implies
$$d_{G[X]}(v)\le 2\sqrt{kr}\le 4k\sqrt{r+1}.$$
\end{proof}

Next, we give bounds on $\nu(G[X])$ and $\nu(G[Y])$.  
\begin{claim}\label{claim:pf:friendship-graphs-stability-matching-number}
$\nu(G[X])\le k\left(1+3r/n\right)$ and $\nu(G[Y])\le k\left(1+3r/n\right)$.
\end{claim}
\begin{proof}
By symmetry, it suffices to prove that $\nu(G[X])\le k\left(1+3r/n\right)$.
Take a maximum matching in $G[X]$ and partition
$\left\lfloor \nu(G[X])/k\right\rfloor k$ of its edges into
$\left\lfloor \nu(G[X])/k\right\rfloor$ disjoint groups of $k$ matching edges.  

Consider one such group, say $x_1x'_1,\ldots,x_kx'_k$.
For each $j$, define
$$Y_j=N_G(x_j)\cap N_G(x'_j)\cap Y.$$
We must have $\bigcap_{j=1}^k Y_j=\varnothing$; otherwise, a vertex $y$ in this intersection would be adjacent to both endpoints of each of the $k$ disjoint internal edges $x_jx'_j$, and these edges together with $y$ would form a copy of $F_k$ centered at $y$.  
It follows that, for every $y\in Y$, at least one crossing edge from $y$ to $\{x_1,x'_1,\ldots,x_k,x'_k\}$ is missing.
Thus this group contributes at least $|Y|$ missing crossing edges.

Since different groups use disjoint vertices in $X$, the corresponding missing edges are distinct.
It means that
$$r\ge \left\lfloor \frac{\nu(G[X])}{k}\right\rfloor |Y|.$$
Since
$$\left\lfloor \frac{\nu(G[X])}{k}\right\rfloor\ge \frac{\nu(G[X])}{k}-1$$
and $|Y|\ge n/3$ by Claim~\ref{claim:pf:friendship-graphs-stability-XYsize}, we obtain
$$\frac{r}{|Y|}\ge \frac{\nu(G[X])}{k}-1,$$
and hence
$$\nu(G[X])\le k\left(1+\frac{r}{|Y|}\right)\le k\left(1+\frac{3r}{n}\right).$$
\end{proof}

\begin{claim}\label{claim:pf:friendship-graphs-stability-s-inequality}
$s\leq 16k^2 \sqrt{s+\alpha+1}+(s+\alpha+1)/4$.
\end{claim}
\begin{proof}
Let $M$ be a maximum matching in $G[X]$, and let $T$ be the set of endpoints of the edges in $M$.  
Clearly, $|T|=2\nu(G[X])$.  
The set $T$ covers all edges of $G[X]$; otherwise, an uncovered edge could be added to $M$, contradicting the fact that $M$ is maximum.
Then
\begin{equation*}\label{eq:pf:friendship-graphs-stability-eGX}
e(G[X])\le \sum_{t\in T}d_{G[X]}(t)\le 2\nu(G[X]) \Delta(G[X]).
\end{equation*}
Using \eqref{eq:r-q-simple} together with Claims~\ref{claim:pf:friendship-graphs-stability-max-degree} and~\ref{claim:pf:friendship-graphs-stability-matching-number}, we obtain
\begin{equation*}\label{eq:pf:friendship-graphs-stability-X-side}
e(G[X])\le 8k^2\left(1+\frac{3r}{n}\right)\sqrt{r+1}
\le 8k^2\left(1+\frac{3(s+\alpha)}{n}\right)\sqrt{s+\alpha+1}.
\end{equation*}
By symmetrical, we get 
\begin{equation*}\label{eq:pf:friendship-graphs-stability-Y-side}
e(G[Y])\le 8k^2\left(1+\frac{3(s+\alpha)}{n}\right)\sqrt{s+\alpha+1}.
\end{equation*}
Therefore
\begin{equation*}\label{eq:pf:friendship-graphs-stability-XY-side}
s=e(G[X])+e(G[Y])\le 16k^2\sqrt{s+\alpha+1} +\frac{48k^2}{n}(s+\alpha+1)^{3/2}.
\end{equation*}

It remains to show that
$$\frac{48k^2}{n}(s+\alpha+1)^{3/2}\le \frac{s+\alpha+1}{4}.$$
By \eqref{eq:pf:friendship-graphs-stability-n0} and Claim~\ref{claim:pf:friendship-graphs-stability-bound-of-rs}, we have
$\alpha+1\leq \varepsilon n^2$ and $s\leq \varepsilon n^2$, respectively.
We get  $s+\alpha+1\leq 2\varepsilon n^2$.
Thus
$$\frac{48k^2}{n}(s+\alpha+1)^{3/2}
  =48k^2(s+\alpha+1)\frac{\sqrt{s+\alpha+1}}{n}
  \le 48k^2\sqrt{2\varepsilon}\,(s+\alpha+1)
  \le \frac{s+\alpha+1}{4},$$
where the last inequality is due to \eqref{eq:pf:friendship-graphs-stability-eps} (that is, $48k^2\sqrt{2\varepsilon}\le 1/4$).
\end{proof}

Define $$\gamma=\max\{(64k^2)^2,5(\alpha+1)\}\quad \text{ and } \quad\beta=4\gamma+1.$$
If $s+\alpha+1\ge \gamma$, then
$$64k^2\leq \sqrt{s+\alpha+1} \qquad \text{and} \qquad\alpha+1\le \frac{s}{4}.$$
Combining this with Claim \ref{claim:pf:friendship-graphs-stability-s-inequality}, we conclude that
$$s\leq 16k^2 \sqrt{s+\alpha+1}+\frac{s+\alpha+1}{4}\le\frac{s+\alpha+1}{2}\le\frac{s+\frac{s}{4}}{2}=\frac{5s}{8},$$
a contradiction. 
So $s+\alpha+1< \gamma$.
Note that $q+r\leq s+\alpha$.
This implies that 
\begin{equation}\label{eq:s-r-q-final}
  s<\gamma<\beta,
  \qquad
  r<\gamma<\beta,
  \qquad \text{and} \qquad
  q<\gamma<\beta,
\end{equation}

Since $|X|+|Y|=n$, we have
$$|X||Y|
=\frac{(|X|+|Y|)^2-(|X|-|Y|)^2}{4}
=\frac{n^2-(|X|-|Y|)^2}{4}.$$
If $n$ is even, then
$$q=\left\lfloor \frac{n^2}{4}\right\rfloor-|X||Y|
=\frac{(|X|-|Y|)^2}{4}.$$
If $n$ is odd, then
$$q=\frac{(|X|-|Y|)^2-1}{4}.$$
In both cases, we have $(|X|-|Y|)^2\le 4q+1$.  
Using \eqref{eq:s-r-q-final} and $\beta=4\gamma+1$, we obtain
$$\bigl||X|-|Y|\bigr|\le \sqrt{4q+1}\le \sqrt{4\gamma+1}<\beta.$$

Therefore, we have
$$e(G[X])+e(G[Y])=s\le \beta,\qquad
|(X\times Y)\setminus E(G)|=r\le \beta,\qquad
\bigl||X|-|Y|\bigr|\le \beta.$$
We finish the proof of the Lemma.
\end{proof}

\begin{lemma}\label{lem:heavy-CH}
Fix integers $k\geq 1$ and $\beta\geq 0$.  
Let $G$ be an $F_k$-free graph, and suppose that every edge of $G[X]$ has at least $|Y|-\beta$ common neighbors in $Y$.  
If $|Y|$ is sufficiently large in terms of $k$ and $\beta$, then
$$\nu(G[X])\le k-1,\qquad\Delta(G[X])\le k-1.$$
The symmetric statement holds with $X$ and $Y$ interchanged.
\end{lemma}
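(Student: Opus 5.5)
Both bounds come from the same observation: a vertex $y\in Y$ adjacent to both endpoints of several edges of $G[X]$ can serve as the center of a copy of $F_k$. The hypothesis makes such vertices plentiful, since each edge of $G[X]$ misses at most $\beta$ vertices of $Y$. We never use edges inside $Y$; the only input is the common-neighborhood condition together with $F_k$-freeness.

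\emph{Matching number.} Suppose $\nu(G[X])\ge k$ and fix $k$ disjoint edges $x_1x_1',\dots,x_kx_k'$ of $G[X]$. Each pair $x_j,x_j'$ has at least $|Y|-\beta$ common neighbors in $Y$. Hence the set of $y\in Y$ adjacent to all $2k$ endpoints has size at least $|Y|-k\beta$, which is positive once $|Y|>k\beta$. For such a $y$, the $k$ edges $x_jx_j'$ form a matching of size $k$ in $G[N_G(y)]$. This gives a copy of $F_k$ centered at $y$, a contradiction. So $\nu(G[X])\le k-1$.

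\emph{Maximum degree.} Suppose some $v\in X$ has $k$ neighbors $u_1,\dots,u_k$ in $X$. Each edge $vu_j$ lies in $G[X]$, so $v$ and $u_j$ have at least $|Y|-\beta$ common neighbors in $Y$. Let $Y_j=N_G(v)\cap N_G(u_j)\cap Y$; then $|Y_j|\ge |Y|-\beta\ge k$ once $|Y|\ge k+\beta$. We choose distinct representatives $w_j\in Y_j$ greedily: at step $j$ at most $j-1<k$ vertices are already used and $|Y_j|\ge k$, so a free choice always exists. Then $u_1w_1,\dots,u_kw_k$ are $k$ pairwise disjoint edges inside $G[N_G(v)]$, because each $u_j\in N_G(v)$ and each $w_j\in N_G(v)$. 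This is a copy of $F_k$ centered at $v$, a contradiction. So $\Delta(G[X])\le k-1$.

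Taking $|Y|>k\beta+k$ suffices for both parts, and the statement with $X$ and $Y$ interchanged follows by the same argument. No step here is a serious obstacle. The point needing care is the degree bound: the center must be $v$ itself, with the $k$ disjoint edges routed through distinct common neighbors in $Y$. Looking instead for a center $y\in Y$, as in the matching argument, would not yield the required $k$ disjoint edges.
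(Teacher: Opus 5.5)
Your proposal is correct and follows the paper's proof essentially verbatim: for the matching bound, a common neighbor $y\in Y$ of all $2k$ endpoints exists once $|Y|>k\beta$ and centers a copy of $F_k$; for the degree bound, distinct common neighbors $y_i\in Y$ of $v$ and $u_i$ are chosen greedily, giving a copy of $F_k$ centered at $v$. Your explicit threshold $|Y|>k\beta+k$ correctly makes precise the paper's phrase ``$|Y|$ large enough.''
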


\begin{proof}
Suppose first that $G[X]$ contains a matching $x_1x_1',\ldots,x_kx_k'$ of size $k$.  
For each $i$, the set of common neighbors of $x_i$ and $x_i'$ in $Y$ has size at least $|Y|-\beta$.  
Hence, if $|Y|>k\beta$, then
$$\left|\bigcap_{i=1}^k \bigl(N_G(x_i)\cap N_G(x_i')\cap Y\bigr)\right|\ge |Y|-k\beta>0.$$
Choose $y$ in this intersection, the $k$ triangles
$yx_1x_1',\ldots,yx_kx_k'$ form a copy of $F_k$ centered at $y$, a contradiction. 
This proves $\nu(G[X])\le k-1$.

Now suppose that some $x\in X$ has $k$ distinct neighbors $x_1,\ldots,x_k$ inside $X$.  
For each $i\in[k]$, the pair $x,x_i$ has at least $|Y|-\beta$ common neighbors in $Y$.  
If $|Y|$ is large enough, we can greedily choose distinct vertices $y_1,\ldots,y_k\in Y$ such that $y_i$ is adjacent to both $x$ and $x_i$.  
Then the $k$ triangles $xx_1y_1,\ldots,xx_ky_k$ form a copy of $F_k$ centered at $x$, a contradiction. 
Therefore, $\Delta(G[X])\le k-1$.
\end{proof}

\begin{corollary}\label{cor:internal-at-most-f}
Under the hypotheses of Lemma~\ref{lem:heavy-CH}, we have
$$e(G[X])\le f(k-1,k-1).$$
The symmetric bound holds for $G[Y]$.
\end{corollary}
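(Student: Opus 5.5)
The corollary follows directly from Lemma~\ref{lem:heavy-CH} together with the definition of $f$. Under the hypotheses of Lemma~\ref{lem:heavy-CH}, that lemma gives
$$\nu(G[X])\le k-1 \qquad\text{and}\qquad \Delta(G[X])\le k-1.$$

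The function $f(\nu,\Delta)$ is defined as the maximum of $e(H)$ over all graphs $H$ with $\nu(H)\le \nu$ and $\Delta(H)\le\Delta$. The graph $G[X]$ is such a graph with $\nu=\Delta=k-1$. Isolated vertices affect neither the matching number, the maximum degree, nor the number of edges, so there is no issue with the vertex count. Hence
$$e(G[X])\le f(k-1,k-1),$$
and by Theorem~\ref{thm:ChvatalH1976} this quantity is finite and explicit.

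For the symmetric statement, apply the symmetric version of Lemma~\ref{lem:heavy-CH}: assume every edge of $G[Y]$ has at least $|X|-\beta$ common neighbours in $X$, with $|X|$ sufficiently large. This gives $\nu(G[Y])\le k-1$ and $\Delta(G[Y])\le k-1$, and the same argument yields $e(G[Y])\le f(k-1,k-1)$.

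There is no real obstacle here. The only point to check is that the supremum in the definition of $f$ ranges over all graphs regardless of order, so $G[X]$ is a valid competitor.
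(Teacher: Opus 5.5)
Your proposal is correct and follows the paper's proof: obtain $\nu(G[X])\le k-1$ and $\Delta(G[X])\le k-1$ from Lemma~\ref{lem:heavy-CH}, then conclude $e(G[X])\le f(k-1,k-1)$ directly from the definition of $f$. The paper phrases this last step as applying the Chv\'atal--Hanson theorem, but it amounts to the same thing, and the case of $G[Y]$ is handled symmetrically in both.
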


\begin{proof}
By Lemma~\ref{lem:heavy-CH}, we have $\nu(G[X])\le k-1$ and $\Delta(G[X])\le k-1$.
Applying the Chv\'atal--Hanson theorem to $G[X]$, we obtain
$e(G[X])\le f(k-1,k-1).$
\end{proof}

\subsection{Proof of Theorem \ref{thm:main-reduce}}
The lower bound in Theorem~\ref{thm:main-reduce} follows from Lemma~\ref{lem:lower-bound-main1}.
Thus, it suffices to establish the upper bound.
The main idea is as follows.
We apply Lemma~\ref{lem:friendship-graphs-stability} to an extremal graph $G$ to obtain a partition $V(G)=X\cup Y$.
We then show that the subgraphs $G[X]$ and $G[Y]$ satisfy certain structural properties.
Moreover, we prove that $G=H_n(P,Q,R)$ for some $k$-admissible triple $(P,Q,R)$.
Finally, we use Lemma~\ref{lem:lower-bound-main1} and analyze the case of equality.

\medskip

Let $G$ be an $n$-vertex $F_k$-free graph attaining $g(k,t,n)$.  
By Lemma~\ref{lem:lower-bound-main1},
\begin{equation}\label{eq:pf:main-reduce-lower-for-extremizer}
t e(G)+N(K_3,G)=g(k,t,n)\ge t\left\lfloor \frac{n^2}{4}\right\rfloor+f(k-1,k-1)n+c_k^*(t).
\end{equation}
Since $G$ is $F_k$-free, it follows from Theorem~\ref{thm:ZhuChen2023} that
$$N(K_3,G)\le f(k-1,k-1)n+C(k,t),$$
where $C(k,t)$ is a constant depending only on $k$ and $t$. 
Together with \eqref{eq:pf:main-reduce-lower-for-extremizer}, this yields
\begin{equation}\label{eq:edge-near-floor}
e(G)\ge \left\lfloor \frac{n^2}{4}\right\rfloor+\frac{c_k^*(t)-C(k,t)}{t}
  =\left\lfloor \frac{n^2}{4}\right\rfloor-O(1).
\end{equation}
Applying Lemma~\ref{lem:friendship-graphs-stability}, we obtain a partition $V(G)=X\cup Y$ such that
$$\bigl||X|-|Y|\bigr|\le \beta,\qquad
  s:=e(G[X])+e(G[Y])\le \beta,\qquad
  r:=|(X\times Y)\setminus E(G)| \le \beta,$$
where $\beta$ is the constant given by Lemma~\ref{lem:friendship-graphs-stability}. 
In particular, $\bigl||X|-|Y|\bigr|\le \beta$ implies $|X|=n/2+O(1)$ and $|Y|=n/2+O(1)$.

Since $|(X\times Y)\setminus E(G)| \le\beta$, every edge in $G[X]$ has at least $|Y|-\beta$ common neighbors in $Y$, 
and every edge in $G[Y]$ has at least $|X|-\beta$ common neighbors in $X$.  
Lemma~\ref{lem:heavy-CH} and Corollary~\ref{cor:internal-at-most-f} imply
\begin{equation*}
\nu(G[X])\le k-1, \qquad \Delta(G[X])\le k-1, \qquad e(G[X])\le f(k-1,k-1),
\end{equation*}
and
\begin{equation*}
\nu(G[Y])\le k-1, \qquad \Delta(G[Y])\le k-1, \qquad e(G[Y])\le f(k-1,k-1).
\end{equation*}
We next show that the two edge inequalities above are in fact equalities.

For $xx'\in E(G[X])$, let
$$d_Y(xx')=|N_G(x)\cap N_G(x')\cap Y|,$$
and define $d_X(yy')$ similarly for $yy'\in E(G[Y])$.

\begin{claim}\label{claim:pf:main-reduce-edges-equal-f}
$e(G[X])=e(G[Y])=f(k-1,k-1)$.
\end{claim}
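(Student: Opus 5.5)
We argue by contradiction, using the extremality of $G$. Suppose, say, $e(G[X])\le f(k-1,k-1)-1$; the case of $Y$ is symmetric. The plan is to write $te(G)+N(K_3,G)$ exactly in terms of the partition $X\cup Y$ and show it falls short of the lower bound \eqref{eq:pf:main-reduce-lower-for-extremizer} by a term linear in $n$.

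First, every triangle of $G$ has at least two vertices in the same part. The triangles entirely inside $X$ or inside $Y$ number $O(1)$, since $s\le\beta$. The remaining triangles each consist of one internal edge $e$ and one vertex of the opposite part adjacent to both ends of $e$. Therefore
$$N(K_3,G)=\sum_{xx'\in E(G[X])} d_Y(xx')+\sum_{yy'\in E(G[Y])} d_X(yy')+O(1)\le e(G[X])|Y|+e(G[Y])|X|+O(1).$$
Also $e(G)\le |X||Y|+s\le \lfloor n^2/4\rfloor+2f(k-1,k-1)$, so $te(G)\le t\lfloor n^2/4\rfloor+O(1)$.

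Next we combine these with Corollary~\ref{cor:internal-at-most-f}, which gives $e(G[Y])\le f(k-1,k-1)$, and with $|X|,|Y|=n/2+O(1)$. This yields
$$te(G)+N(K_3,G)\le t\left\lfloor \frac{n^2}{4}\right\rfloor+\bigl(f(k-1,k-1)-1\bigr)\frac{n}{2}+f(k-1,k-1)\frac{n}{2}+O(1)=t\left\lfloor \frac{n^2}{4}\right\rfloor+f(k-1,k-1)n-\frac n2+O(1),$$
where all $O(1)$ terms depend only on $k$, $t$ and $\beta$. Since $c_k^*(t)$ is a constant, this is strictly less than $t\lfloor n^2/4\rfloor+f(k-1,k-1)n+c_k^*(t)$ for $n$ large, contradicting \eqref{eq:pf:main-reduce-lower-for-extremizer}. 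Hence $e(G[X])=f(k-1,k-1)$, and symmetrically $e(G[Y])=f(k-1,k-1)$.

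The only delicate point is the bookkeeping of error terms. We must check that the triangles not of the form (internal edge, opposite vertex) contribute only $O(1)$; this holds because there are $O(1)$ internal edges, so triangles with all three vertices in one part number at most $O(\beta)$. We must also use $|Y|\le n/2+\beta$ rather than a cruder bound, so that the deficit $n/2$ is not absorbed by the error terms. No structural information beyond Lemma~\ref{lem:friendship-graphs-stability} and Corollary~\ref{cor:internal-at-most-f} is needed.
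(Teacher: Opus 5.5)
Your proof is correct and follows the paper's argument essentially line for line. It uses the same decomposition of triangles by their internal edge, the same bound $t e(G)+N(K_3,G)\le t|X||Y|+e(G[X])|Y|+e(G[Y])|X|+O(1)$, and the same loss of about $n/2$ when $e(G[X])\le f(k-1,k-1)-1$, which contradicts the lower bound on $g(k,t,n)$. To sharpen your closing remark: since $(f(k-1,k-1)-1)|Y|+f(k-1,k-1)|X|=f(k-1,k-1)n-|Y|$, the estimate that keeps the deficit from being absorbed is the lower bound $|Y|\ge (n-\beta)/2$, which follows from $\bigl||X|-|Y|\bigr|\le\beta$.
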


\begin{proof}
Since every triangle is either contained in $X$ or $Y$, or has exactly one internal edge, we get
\begin{equation*}\label{eq:triangle-cut-count}
\begin{aligned}
  N(K_3,G)={}&
  \sum_{xx'\in E(G[X])}d_Y(xx')
  +\sum_{yy'\in E(G[Y])}d_X(yy')\\
  &+N(K_3,G[X])+N(K_3,G[Y]).
\end{aligned}
\end{equation*}
Note that $e(G)=|X||Y|-r+s$.
Since $r\le \beta$ and $s\le \beta$, we have
$N(K_3,G[X])+N(K_3,G[Y])=O(1)$ and $s-r=O(1)$.
Moreover, $d_Y(xx')\le |Y|$ for every $xx'\in E(G[X])$ and $d_X(yy')\le |X|$ for every $yy'\in E(G[Y])$.
Then
\begin{equation*}\label{eq:upper-via-pq}
te(G)+N(K_3,G)\le t|X||Y|+e(G[X])|Y|+e(G[Y])|X|+O(1).
\end{equation*}
If, say, $e(G[X])\le f(k-1,k-1)-1$, then, using $|Y|=n/2+O(1)$ and $|X|+|Y|=n$, we deduce that
\begin{align*}
t e(G)+N(K_3,G)
&\le t|X||Y|+e(G[X])|Y|+e(G[Y])|X|+O(1)\\ 
&\le t\left\lfloor \frac{n^2}{4}\right\rfloor
 +(f(k-1,k-1)-1)|Y|+f(k-1,k-1)|X|+O(1)\\ 
&\le t\left\lfloor \frac{n^2}{4}\right\rfloor
 +f(k-1,k-1)n-\frac{n}{2}+O(1).
\end{align*}
This contradicts \eqref{eq:pf:main-reduce-lower-for-extremizer} for sufficiently large $n$.  
Thus $e(G[X])=f(k-1,k-1)$. 
The same argument shows that $e(G[Y])=f(k-1,k-1)$.
\end{proof}

Let $$A=\{x\in X, d_{G[X]}(x)>0\},\,\, P=G[A],\,\, B=\{y\in Y, d_{G[Y]}(y)>0\},\,\, \text{ and } \,\, Q=G[B].$$
Therefore, Claim \ref{claim:pf:main-reduce-edges-equal-f} gives $P,Q\in\mathcal{P}_k$.
It remains to prove that all missing crossing edges lie inside $A\times B$ and that $(P,Q,R)$ is admissible, where $R=G[A,B]$.

\begin{figure}[ht]
\centering
\begin{tikzpicture}[scale=0.9]
  \draw[rounded corners] (-3,-1.4) rectangle (-0.4,1.4);
  \draw[rounded corners] (0.4,-1.4) rectangle (3,1.4);
  \node at (-1.7,1.7) {$X$};
  \node at (1.7,1.7) {$Y$};
  \draw[rounded corners,fill=gray!15] (-2.7,0.35) rectangle (-1.1,1.15);
  \draw[rounded corners,fill=gray!15] (1.1,0.35) rectangle (2.7,1.15);
  \node at (-1.9,0.75) {$A$};
  \node at (1.9,0.75) {$B$};
  \node at (-1.7,-0.65) {$X\setminus A$};
  \node at (1.7,-0.65) {$Y\setminus B$};
  \draw[very thick] (-0.4,-0.8) -- (0.4,-0.8);
  \draw[very thick] (-0.4,0.0) -- (0.4,0.0);
  \draw[dashed,thick] (-1.1,0.75) -- (1.1,0.75);
\end{tikzpicture}
\caption{Construction of the graph $G$.}
\end{figure}

\begin{claim}\label{claim:pf:main-reduce-admissibility}
For every $a\in A$ and $b\in B$, we have
\begin{equation}\label{eq:pf:main-reduce-admissibility-a}
  d_P(a)+\nu\bigl(Q[N_G(a) \cap B]\bigr)\le k-1,
\end{equation}
and
\begin{equation*}\label{eq:pf:main-reduce-admissibility-b}
  d_Q(b)+\nu\bigl(P[N_G(b) \cap A]\bigr)\le k-1.
\end{equation*}
\end{claim}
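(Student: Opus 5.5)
We argue by contradiction, using that $G$ is $F_k$-free: a violation of the inequality would let us build a copy of $F_k$ centred at $a$. By symmetry between $X$ and $Y$ it suffices to prove \eqref{eq:pf:main-reduce-admissibility-a}. Fix $a\in A$. Put $d=d_P(a)$ and $m=\nu(Q[N_G(a)\cap B])$, and suppose $d+m\ge k$.

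First step. Choose a matching $y_1y_1',\ldots,y_my_m'$ in $Q[N_G(a)\cap B]$. Each edge $y_iy_i'$ lies in $G[N_G(a)]$, because both endpoints are adjacent to $a$. Let $N_P(a)=\{x_1,\ldots,x_d\}$. For $j\le k-m$ we want distinct vertices $z_j\in Y$ such that $z_j$ is adjacent to both $a$ and $x_j$, and $z_j\notin\{y_1,y_1',\ldots,y_m,y_m'\}$. Then the edges $x_jz_j$ also lie in $G[N_G(a)]$.

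Second step: finding the $z_j$. Let $W$ be the set of vertices of $Y$ adjacent to $a$ and to every $x_j$. Since $r=|(X\times Y)\setminus E(G)|\le\beta$, we have $|W|\ge |Y|-(d+1)\beta\ge |Y|-k\beta$. As $|Y|=n/2+O(1)$ and $n$ is large, $W$ is far larger than the at most $2k$ vertices we must avoid. So the $z_j$ can be chosen greedily.

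Conclusion. The $m$ edges $y_iy_i'$ together with the $k-m$ edges $x_jz_j$ are pairwise vertex-disjoint: the $x_j$ lie in $X$, while the $z_j$ are distinct vertices of $Y$ avoiding the $y$'s. This gives a matching of size $k$ in $G[N_G(a)]$, hence a copy of $F_k$ centred at $a$, a contradiction. The statement for $b\in B$ follows by interchanging the roles of $X$ and $Y$.

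The only point needing care is the disjointness bookkeeping when $m<k$ and $d\ge k-m$, and it is handled by the largeness of $W$. The argument uses only $r\le\beta$ and $n$ large; the maximality of $G$ is not needed.
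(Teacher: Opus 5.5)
Your proof is correct and is essentially the paper's argument: both use $r\le\beta$ to find, for the neighbours of $a$ in $P$, distinct common neighbours with $a$ in $Y$ that avoid the matching in $Q[N_G(a)\cap B]$, which gives a matching of size $k$ in $G[N_G(a)]$ and hence a copy of $F_k$ centred at $a$. The only differences are cosmetic: the paper takes these vertices in $Y\setminus B$ and uses all $d_P(a)$ neighbours, whereas you avoid only the $2m$ matched vertices and use just $k-m$ of the neighbours.
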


\begin{proof}
It is enough to prove \eqref{eq:pf:main-reduce-admissibility-a}, because the other inequality is symmetric.  
Suppose it fails for some $a\in A$.  Let $M_Q(a)$ be a matching in $Q[N_G(a)\cap B]$ 
such that
$$d_P(a)+|M_Q(a)|\ge k.$$
For each $u\in N_P(a)$, the pair $a,u$ is an internal edge of $G[X]$, 
so by $|(X\times Y)\setminus E(G)| \le\beta$ it has at least $|Y|-\beta$ common neighbors in $Y$.  
Since $d_P(a)\le e(G[X])\le \beta$, $|B|$ is a constant, and $n$ is large, we may choose distinct vertices of $Y\setminus B$ adjacent to both $a$ and the different vertices $u\in N_P(a)$.  
The corresponding edges from $N_P(a)$ to these chosen vertices form a matching in $G[N_G(a)]$ of size $d_P(a)$.  
This matching is vertex-disjoint from $M_Q(a)$, since the chosen vertices lie in $Y\setminus B$, whereas all vertices of $M_Q(a)$ lie in $B$.  
Together this matching with the matching $M_Q(a)$, which is disjoint from $N_P(a)$ to these chosen vertices, this forms a matching of size at least $k$ inside $G[N_G(a)]$.  
This is a copy of $F_k$ centered at $a$, a contradiction.
\end{proof}

\begin{claim}\label{claim:pf:main-reduce-crossing-nonedge-outside}
Every missing crossing edge of $G$ lies in $A\times B$. Equivalently,
$d_G(x)=|Y|$ for every $x\in X\setminus A$ and $d_G(y)=|X|$ for every $y\in Y\setminus B$.
\end{claim}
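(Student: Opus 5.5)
We argue by an edge-addition (switching) argument that uses the extremality of $G$. Suppose, for contradiction, that some $x\in X\setminus A$ has a non-neighbour $y\in Y$; the case $y\in Y\setminus B$ is symmetric. Consider $G'=G+xy$. We show that $G'$ is still $F_k$-free and that $te(G')+N(K_3,G')>te(G)+N(K_3,G)$. This contradicts the choice of $G$ as a graph attaining $g(k,t,n)$. Since $t\ge 1$, the edge term already grows by $t$, and $N(K_3,\cdot)$ can only grow when edges are added. So the whole proof reduces to showing that $G'$ is $F_k$-free.

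Suppose $G'$ contains a copy $\Lambda$ of $F_k$ with center $v$. Then $\Lambda$ uses $xy$, and $\nu(G'[N_{G'}(v)])\ge k$. We split into cases according to $v$.

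\textbf{Case $v=x$.} Since $x\notin A$, we have $N_{G'}(x)\subseteq Y$. Every edge of $G'[N_{G'}(x)]$ is an edge of $G[Y]=Q$ plus isolated vertices, so the matching number is at most $\nu(Q)\le k-1$. This is a contradiction.

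\textbf{Case $v=y$.} The edges of $G'[N_{G'}(y)]$ are edges of $G[N_G(y)]$ together with edges at $x$ inside $N_G(y)$. All neighbours of $x$ lie in $Y$, so these extra edges lie in $N_G(y)\cap Y=N_Q(y)$. If $y\notin B$, there are none. If $y\in B$, there are at most $d_Q(y)\le k-1$ of them.
\begin{itemize}[leftmargin=*]
\item If $y\notin B$, a matching of size $k$ in $G'[N_{G'}(y)]$ must consist of edges of $G[X]=P$. This is impossible because $\nu(P)\le k-1$.
\item If $y\in B$, a maximum matching contains at most one edge through $x$. Every other edge lies in $P[N_G(y)\cap A]$ or is incident to $N_Q(y)$. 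Hence the matching has size at most
$$1+\nu\bigl(P[N_G(y)\cap A]\bigr)+\bigl(d_Q(y)-1\bigr)\le k-1$$
by Claim~\ref{claim:pf:main-reduce-admissibility}. Note that the edge through $x$ already uses one vertex of $N_Q(y)$, which is where the $-1$ comes from.
\end{itemize}
This subcase is where the counting must be done carefully.

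\textbf{Case $v\ne x,y$.} Then $x,y\in N_{G'}(v)$ and $xy$ is a matching edge. Removing it leaves a matching of size $k-1$ in $G[N_G(v)]$ avoiding $x$ and $y$.

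Here we use the abundance of the vertices of $X\setminus A$ whose neighbourhoods are essentially all of $Y$. There are only $r\le\beta$ missing crossing edges, so there are many vertices $x'\in X\setminus A$ adjacent to all of $Y$ and not used by $\Lambda$. If $v\in Y$, then $x'$ is a common neighbour of $v$ and $y$, so replacing $x$ by $x'$ gives a matching of size $k$ in $G[N_G(v)]$. If $v\in X$, then $y$ is adjacent to $v$ and lies in $Y$, and $x'$ is adjacent to $y$. The danger is that $x'$ might not be adjacent to $v$. However, $v\in A$ (since $v$ is adjacent to $x\in X$), and so, without needing $x'$, we can instead replace the edge $xy$ by an edge $uy'$ with $u\in N_P(v)\setminus V(\Lambda)$. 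If $N_P(v)\subseteq V(\Lambda)$, we instead use the fact that $x\in N_P(v)$, which is absurd since $x\notin A$. In fact, $v\in X$ adjacent to $x$ would force $x\in A$, so this subcase cannot occur at all. In every case $G$ would contain $F_k$, a contradiction.

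The main obstacle is the case $v=y\in B$: the edge $xy$ can combine with edges of $Q$ at $y$ and with $P$-edges among the neighbours of $y$. The bound must then come precisely from Claim~\ref{claim:pf:main-reduce-admissibility}, and we must check that the edges at $x$ and the $Q$-edges at $y$ compete for the same vertices of $N_Q(y)$.
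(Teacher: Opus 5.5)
Your proposal is correct and follows the paper's argument. You add $xy$, show that $G+xy$ is still $F_k$-free by bounding link matching numbers, and contradict extremality using $t\ge 1$. For centers $x$ and $y$ your bounds are exactly the paper's; the $y\in B$ count is the estimate $d_Q(b)+\nu\bigl(P[N_G(b)\cap A]\bigr)\le k-1$ from Claim~\ref{claim:pf:main-reduce-admissibility}, with the edges at $x$ absorbed into the $d_Q(y)$ term.

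The only deviation is for centers $v\notin\{x,y\}$. There you swap $x$ for an unused vertex $x'\in X\setminus A$ complete to $Y$, which is valid for large $n$ since $|A|\le 2f(k-1,k-1)$ and $r\le\beta$. The paper instead reapplies the same uniform bound directly, because $N_{G+xy}(v)=N_G(v)$. Your detour in the $v\in X$ subcase is unnecessary, since that subcase is vacuous from the outset, as you eventually note.
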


\begin{proof}
Assume that $xy$ is a missing crossing edge, that is, $xy\in (X\times Y)\setminus E(G)$, where $x\in X$, $y\in Y$, and either $x\notin A$ or $y\notin B$.  
We show that $G+xy$ is still $F_k$-free.  
This contradicts the extremality of $G$, because adding an edge increases $t e(G)+N(K_3,G)$ by at least $t$, and $t\geq 1$.
Let $G'=G+xy$.  
We will prove that every vertex $v\in V(G')$ satisfies
$$\nu(G'[N_{G'}(v)])\leq k-1.$$

We first consider a vertex $v\in X\setminus A$. 
By the definition of $A$, we have $d_{G[X]}(v)=0$.
Thus $N_{G'}(v)\subseteq Y$.
Since the added edge $xy$ is a crossing edge, it does not change the edges inside $Y$; hence $G'[Y]$ is obtained from $Q$ by adding isolated vertices.
Thus,
$$\nu(G'[N_{G'}(v)])\le \nu(G'[Y])=\nu(Q)\le k-1.$$
The same argument applies to every $v\in Y\setminus B$.

Now consider a vertex $a\in A$. 
Every edge in $G'[N_{G'}(a)]$ is either incident with a vertex of $N_P(a)$ or is an edge of $Q[N_{G'}(a)\cap B]$.  
Therefore, every matching in $G'[N_{G'}(a)]$ has size at most
\[
d_P(a)+\nu(Q[N_{G'}(a)\cap B]).
\]
Since $xy\notin A\times B$, the added edge does not change $N_G(a)\cap B$. 
Namely, 
\[
N_{G'}(a)\cap B=N_G(a)\cap B.
\]
By Claim~\ref{claim:pf:main-reduce-admissibility}, this upper bound is at most $k-1$.  
Thus $a$ cannot be the center of a copy of $F_k$ in $G'$.  
The proof for a vertex $b\in B$ is symmetric, again using Claim~\ref{claim:pf:main-reduce-admissibility}.

All vertices have now been checked, so $G'$ is $F_k$-free.  
This contradicts the extremality of $G$, and the claim holds.
\end{proof}

By Claim~\ref{claim:pf:main-reduce-crossing-nonedge-outside}, the set of crossing edges has the form
$$E_G(X,Y)=\bigl((X\times Y)\setminus(A\times B)\bigr)\cup E(R).$$
Claim~\ref{claim:pf:main-reduce-admissibility} now becomes exactly the admissibility conditions \eqref{eq:adm-A} and \eqref{eq:adm-B}, 
as $N_G(a)\cap B=N_R(a)$ and $N_G(b)\cap A=N_R(b)$. 
Thus $(P,Q,R)$ is $k$-admissible.

Since $|X||Y|\le \left\lfloor n^2/4\right\rfloor$ and $\Phi(P,Q,R,t)\le c_k^*(t)$, using the same counting as in Lemma~\ref{lem:lower-bound-main1}, we have
\begin{align}
t e(G)+N(K_3,G)
&=t|X||Y|+f(k-1,k-1)n+\Phi(P,Q,R,t)\nonumber\\
&\leq t\left\lfloor \frac{n^2}{4}\right\rfloor+f(k-1,k-1)n+c_k^*(t).
\label{eq:exact-count-unbalanced}
\end{align}
Combining this with the lower bound \eqref{eq:pf:main-reduce-lower-for-extremizer}, we obtain \eqref{eq:exact-M}.

Finally, if $G$ is extremal, then equality must hold in \eqref{eq:exact-count-unbalanced}.  
So
$$|X||Y|=\left\lfloor{n^2/4}\right\rfloor\qquad \text{and} \qquad \Phi(P,Q,R,t)=c_k^*(t).$$
This proves the extremal structure and completes the proof of Theorem~\ref{thm:main-reduce}.
\qed

\section{Concluding remarks}\label{concluding-remarks}
By a small modification of the proof, Theorem~\ref{thm:main} can be extended to vertex-disjoint unions of different friendship graphs as follows.

\begin{theorem}
Let $t\geq 1$ and $\ell_1\geq \ell_2\geq \cdots\geq \ell_{t+1}\geq 3$ be integers.
For sufficiently large $n$, we have
\begin{equation*}
  \mathrm{ex}\left(n,K_3,\bigcup_{i=1}^{t+1}F_{\ell_i}\right)
  =\binom{t}{3}+(n-t)\binom{t}{2}
  +t\left\lfloor \frac{(n-t)^2}{4} \right\rfloor
  +f(\ell_{t+1}-1,\ell_{t+1}-1)(n-t)
  +c_{\ell_{t+1}}^*(t).
\end{equation*}
Moreover, every extremal graph is of the form $K_t \vee H_{n-t}(P,Q,R)$ for some $\ell_{t+1}$-admissible triple $(P,Q,R)$ with $\Phi(P,Q,R,t)=c_{\ell_{t+1}}^*(t)$.
\end{theorem}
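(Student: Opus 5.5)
The plan is to follow the two-stage scheme used for Theorem~\ref{thm:main}, with $k$ replaced by $\ell:=\ell_{t+1}$, the smallest friendship graph in the union. Write $\mathcal{F}=\bigcup_{i=1}^{t+1}F_{\ell_i}$. Theorem~\ref{thm:main-reduce} is applied verbatim with $k=\ell$; its statement involves only one friendship graph, so it needs no change.

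\textbf{Lower bound.} Take an $\ell$-admissible triple $(P,Q,R)$ with $\Phi(P,Q,R,t)=c_\ell^*(t)$ and set $G=K_t\vee H_{n-t}(P,Q,R)$. By Lemma~\ref{lem:construction-is-free}, $H_{n-t}(P,Q,R)$ is $F_\ell$-free. It is therefore also $F_{\ell_i}$-free for every $i$, since $F_\ell\subseteq F_{\ell_i}$. Suppose $G$ contained $\mathcal{F}$. Each of its $t+1$ vertex-disjoint components would have to meet the $t$-vertex set $V(K_t)$, which is impossible. The triangle count is then exactly the computation in the proof of Theorem~\ref{thm:main}, using Lemma~\ref{lem:lower-bound-main1}.

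\textbf{Upper bound.} Let $G$ be an extremal $\mathcal{F}$-free graph. I would prove an analogue of Lemma~\ref{lem:stability-for-T}: there is a set $T$ with $|T|=t$ such that $G\setminus T$ is $F_\ell$-free.
\begin{enumerate}[label=(\roman*)]
\item Since $\mathcal{F}\supseteq (t+1)F_\ell$ fails to embed, $G$ is not forced to be $(t+1)F_\ell$-free. So I would not run the argument with copies of $F_\ell$. Instead take $S$ to be the vertex set of a maximal family of disjoint copies of $F_{\ell_1}$. This family has at most $t$ members, because $t+1$ disjoint copies of $F_{\ell_1}$ contain $\mathcal{F}$. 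Then $G\setminus S$ is $F_{\ell_1}$-free, so Theorem~\ref{thm:ZhuChen2023} with $k=\ell_1$ gives $N(K_3,G\setminus S)=O(n)$. Consequently \eqref{eq:link-sum} holds.
\item Define $T$ as the set of vertices $v\in S$ with $e(G[N_G(v)\setminus S])=\Omega(n^2)$. Suppose $|T|\ge t+1$. Greedily choose vertex-disjoint matchings of sizes $\ell_1,\dots,\ell_{t+1}$ in these link graphs, using Lemma~\ref{lem:Erdos1959} with a matching size that is still a constant. This embeds $\mathcal{F}$, a contradiction. The argument that $|T|\ge t$ is unchanged.
\item Suppose $G\setminus T$ contains a copy $\Lambda_0$ of $F_\ell$. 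Use $\Lambda_0$ as the $F_{\ell_{t+1}}$ component. Centered at the vertices of $T$, greedily build disjoint copies of $F_{\ell_1},\dots,F_{\ell_t}$ that avoid $\Lambda_0\cup S$. This again embeds $\mathcal{F}$.
\end{enumerate}
With $T$ in hand, the chain \eqref{eq:pf:main-upper} applies with $g(\ell,t,n-t)$. Theorem~\ref{thm:main-reduce} then gives the upper bound, and the equality analysis gives $G=K_t\vee H_{n-t}(P',Q',R')$ as before.

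\textbf{Main obstacle.} The delicate point is step~(i). The original Lemma~\ref{lem:stability-for-T} derived $O(n)$ triangles outside $S$ from $F_k$-freeness, and here one must pick the largest graph $F_{\ell_1}$ to obtain that bound. A second check is that the thresholds in the definition of $T$ are compatible with all $\ell_i$: they must grow with $\ell_1$ rather than with $k$, e.g.\ $\delta n^2/(8\ell_1(t+1))$. The rest of the argument is a direct transcription of the proof of Theorem~\ref{thm:main}.
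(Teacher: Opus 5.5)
Your proposal is correct and is essentially the paper's approach. The paper only says the result follows by a small modification of the proof of Theorem~\ref{thm:main}, and your argument is that modification. You use Theorem~\ref{thm:main-reduce} verbatim with $k=\ell_{t+1}$. The one real change is in the analogue of Lemma~\ref{lem:stability-for-T}, and you handle it correctly: you build $S$ from disjoint copies of $F_{\ell_1}$, so that $G\setminus S$ has only $O(n)$ triangles. The $\Omega(n^2)$ links at the vertices of $T$ then host $F_{\ell_1},\dots,F_{\ell_t}$, while any $F_{\ell_{t+1}}$ in $G\setminus T$ would supply the missing last component.
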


Recall that 
\begin{eqnarray*}
  \Phi(P,Q,R,t)&=&( 2f(k-1,k-1)-|A||B|+e(R))t-f(k-1,k-1)(|A|+|B|)\nonumber\\
  &&\quad +N(K_3,P)+N(K_3,Q)+\tau_R(P,Q).
\end{eqnarray*}
and
\begin{equation*}
  c_k^*(t)=\max\{\Phi(P,Q,R,t):(P,Q,R,t)\text{ is }k\text{-admissible}\}
\end{equation*}

The finite optimization explains precisely why adding suitable edges between $A$ and $B$ can improve the constant.  
Compared with deleting all edges of $A\times B$, a graph $R\subseteq A\times B$ changes the constant by
$$e(R)t+\tau_R(P,Q).$$
If $R$ is a matching, then $\tau_R(P,Q)=0$, 
so every added matching edge improves the value by exactly $t$.

For odd $k$, a natural Chv\'atal--Hanson construct is $2K_k$.  
We have the following proposition.
We write $P=P_1\cup P_2$ and $Q=Q_1\cup Q_2$, where each $P_i$ and $Q_j$ is a copy of $K_k$.

\begin{proposition}\label{prop:odd-crossing}
Let $k\ge 3$ be odd and $P=Q=2K_k$.  
If $(P,Q,R)$ is $k$-admissible, then for every $i,j\in\{1,2\}$ the bipartite graph $R[P_i,Q_j]$ is a matching.
In particular, $$e(R)\le 4k.$$
The bound is attained by taking a perfect matching between each pair $P_i,Q_j$.
\end{proposition}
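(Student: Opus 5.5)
Since $P=2K_k$, every $a\in A$ lies in some $P_i\cong K_k$, so $d_P(a)=k-1$. Condition \eqref{eq:adm-A} therefore forces $\nu\bigl(Q[N_R(a)]\bigr)=0$, i.e.\ $N_R(a)$ is an independent set in $Q$. Because $Q=Q_1\cup Q_2$ with each $Q_j$ complete, an independent set meets each $Q_j$ in at most one vertex. Hence every $a\in P_i$ has at most one $R$-neighbour in $Q_j$. Symmetrically, \eqref{eq:adm-B} together with $d_Q(b)=k-1$ shows that every $b\in Q_j$ has at most one $R$-neighbour in $P_i$. So in the bipartite graph $R[P_i,Q_j]$ all degrees are at most $1$, which means it is a matching. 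Since $|P_i|=|Q_j|=k$, this gives $e(R[P_i,Q_j])\le k$, and summing over the four pairs $(i,j)$ yields $e(R)\le 4k$.

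For attainment, take $R$ to be the union of four perfect matchings, one between each $P_i$ and $Q_j$. We must verify admissibility, i.e.\ that $\nu(Q[N_R(a)])=0$ for every $a\in A$. Here $N_R(a)$ consists of exactly one vertex of $Q_1$ and one of $Q_2$, and there are no edges between $Q_1$ and $Q_2$, so this set is independent in $Q$. The same holds for every $b\in B$. Finally, $2K_k\in\mathcal P_k$ for odd $k$: it has $\nu=2\cdot\frac{k-1}{2}=k-1$, $\Delta=k-1$, and $k(k-1)=f(k-1,k-1)$ edges by the Abbott--Hanson value, with no isolated vertices.

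There is no real obstacle; the only point needing care is the observation that independence in a disjoint union of cliques means at most one vertex per clique, which converts the matching-number condition into a degree condition on $R$.
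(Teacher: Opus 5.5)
Your proof is correct and follows essentially the same route as the paper. The condition $d_P(a)=k-1$ forces $N_R(a)$ to be independent in $Q$, so it meets each clique $Q_j$ at most once; symmetrically for $b\in B$, so each $R[P_i,Q_j]$ is a matching with at most $k$ edges. You additionally verify that the four perfect matchings give a $k$-admissible triple and that $2K_k\in\mathcal{P}_k$, which the paper's proof leaves implicit (it only notes the converse, that $e(R)=4k$ forces each pair to be a perfect matching).
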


\begin{proof}
For every $a\in V(P)$ we have $d_P(a)=k-1$.  
The admissibility condition \eqref{eq:adm-A} implies $$\nu(Q[N_R(a)])=0.$$
Thus $N_R(a)$ is an independent set in $Q$, so it contains at most one vertex from each clique $Q_j$.  
Thus each vertex of $P_i$ has degree at most one into each $Q_j$.  
By the symmetric admissibility condition, each vertex of $Q_j$ has degree at most one into each $P_i$. 
It follows that $R[P_i,Q_j]$ is a matching and has at most $k$ edges. 
Summing over the four pairs $P_i,Q_j$ yields that $e(R)\le 4k$.
Moreover, $e(R)=4k$ implies each pair $P_i,Q_j$ is a perfect matching.
\end{proof}

It is worth noting that $\tau_R(2K_k,2K_k)=0$ and
$$\Phi(2K_k,2K_k,R,t)=(e(R)-2k^2-2k)t-\frac{k(k-1)(10k+4)}{3}.$$
If $R$ is a perfect matching between each pair $P_i,Q_j$, then
$$\Phi(2K_k,2K_k,R,t)=-\frac{10k+6t+4}{3}k(k-1)$$
and hence
$$c_k^*(t)\geq -\frac{10k+6t+4}{3}k(k-1)\quad  \text{ for $k$ is odd}.$$

An interesting problem is to determine the exact value of $c^*_k(t)$.

\begin{problem}
For integers $k\geq 1$ and $t\geq 1$, determined the exact value of $c^*_k(t)$.
\end{problem}

%Therefore, we pose the following conjecture.

%\begin{conjecture}
%For $k$ is odd, we have $$c_k^*(t)= -\frac{10k+6t+4}{3}k(k-1).$$
%\end{conjecture}

\section*{Acknowledgement}
\noindent This research is supported by National Key R\&D Program of China under grant number 2024YFA1013900, NSFC under grant number 12471327,  JSPS KAKENHI Grant Number 25KF0036, the NSF of Hubei Province Grant Number 2025AFB309,  the China Postdoctoral Science Foundation  Grant Number 2025M773113, the Fundamental Research Funds for the Central Universities, Central China Normal University Grant Number CCNU24XJ026.

\section*{Declaration}
	
\noindent$\textbf{Conflict~of~interest}$
The authors declare that they have no known competing financial interests or personal relationships that could have appeared to influence the work reported in this paper.
	
\noindent$\textbf{Data~availability}$
No data was used for the research described in the article.

\end{document}